\numberwithin{equation}{section}
\theoremstyle{plain}
\newtheorem{thm}{Theorem}[section]
\newtheorem{lem}[thm]{Lemma}
\newtheorem{prop}[thm]{Proposition}
\newtheoremstyle{named}{}{}{\itshape}{}{\bfseries}{.}{.5em}{\thmnote{#3's }#1}
\theoremstyle{named}
\theoremstyle{definition}
\newtheorem{defn}[thm]{Definition}
\definecolor{darkgreen}{rgb}{0,0.5,0}
\newcommand{\jlcomment}[1]{\marginpar{\raggedright\scriptsize{\textcolor{darkgreen}{#1}}}}
\newcommand{\RR}{\mathbb{R}}
\newcommand{\eps}{\varepsilon}
\newcommand{\al}{\alpha}
\newcommand{\ep}{\epsilon}
\newcommand{\la}{\lambda}
\newcommand{\ch}{\text{\rm ch}}
\newcommand{\supp}{\text{\rm supp}}
\newcommand{\interior}{\text{\rm int}}
\newcommand{\rst}[1]{\ensuremath{{\mathbin\upharpoonright}%
\raise-.5ex\hbox{$#1$}}}
\DeclareMathOperator{\card}{card}
\newcounter{gscan}
\newcounter{btscan}
\newcounter{cscan}
\newcounter{hscan}
\newcounter{fscan}
\newcounter{pscan}
\newcounter{sscan}
\DeclareMathOperator*{\esssup}{\text{\rm ess\,supp}}
\title[Multidimensional Fisher-KPP transition fronts]{Multidimensional transition fronts for Fisher-KPP reactions}
\author[Alwan et al]{Amir Alwan}
\author[]{Zonglin Han}
\author[]{Jessica Lin}
\address[AA, ZH, JL]{Department of Mathematics, University of Wisconsin--Madison, 480 Lincoln Dr., Madison, WI 53706}
\email[Alwan, Han, Lin]{alwan@wisc.edu, zhan29@wisc.edu, jessica@math.wisc.edu}
\author[]{Zijian Tao}
\address[ZT]{Department of Mathematics, California Institute of Technology, 1200 E. California Blvd., Pasadena, CA 91125}
\email[Tao]{ztao@caltech.edu}
\author[]{Andrej Zlato\v{s}}
\address[AZ]{Department of Mathematics, University of California San Diego, 9500 Gilman Dr. \# 0112, La Jolla, CA 92093}
\email[Zlato\v{s}]{zlatos@ucsd.edu}
\begin{document}
\keywords{KPP reaction-diffusion equations, transition fronts, transition solutions with bounded width}
\subjclass[2010]{35K57, 35K55}

\begin{abstract}
We study entire solutions to homogeneous reaction-diffusion equations in several dimensions with Fisher-KPP reactions.  
Any entire solution $0<u<1$ is known to satisfy
\[
\lim_{t\to -\infty} \sup_{|x|\le c|t|} u(t,x) = 0 \qquad \text{for each $c<2\sqrt{f'(0)}\,$,}
\]
and we consider here those satisfying 
\[
\lim_{t\to -\infty} \sup_{|x|\le c|t|} u(t,x) = 0 \qquad \text{for some $c>2\sqrt{f'(0)}\,$.}
\]
When $f$ is $C^2$ and concave, our main result provides an almost complete characterization of transition fronts as well as transition solutions with bounded width within this class of solutions. 

\end{abstract}

\maketitle

\section{Introduction}
In this paper we study entire solutions of reaction-diffusion equations
\begin{equation}\label{RxnDiffEq}
u_t=\Delta u+f(u)\qquad \text{on }\mathbb{R}\times \mathbb{R}^{d},
\end{equation}
with Fisher-KPP reaction functions $f\in C^{1+\gamma}([0,1])$ for some  $\gamma>0$.  Specifically, we also assume that
\begin{equation} \label{hyp:flip}
 f(0)=f(1)=0, \qquad  f^{\prime}(0)=1, \qquad 
 \text{$0<f(u)\le u$ on $(0,1)$.}
\end{equation}

We note that a simple scaling argument extends our results to the general Fisher-KPP case
\[
 f(0)=f(1)=0, \qquad  f^{\prime}(0)>0, \qquad 
 \text{$0<f(u)\le f'(0)u$ on $(0,1)$.}
\]

The study of \eqref{RxnDiffEq} was started 80 years ago by Kolmogorov, Petrovskii, and Piskunov \cite{kpp} and Fisher \cite{fisheradv} in one dimension $d=1$, while here we consider entire solutions $u:\mathbb R^{d+1}\to[0,1]$ for any $d\ge 1$.  These model propagation of reactive processes such as forest fires, nuclear reactions in stars, or population dynamics.  The value $u=0$ represents the unburned (or minimal-temperature or zero-population-density) state, while $u=1$ represents the burned (or maximal-temperature or maximal-population-density) state.  
Fisher-KPP reactions possess the ``hair-trigger effect'', meaning that for any non-zero solution $0\le u \le 1$, the asymptotically stable state $u=1$ will invade the whole spatial domain $\mathbb R^d$ as $t\to\infty$ (while the state $u=0$ is unstable).  In fact, we have \cite{AW}
\begin{equation} \label{9.0}
\lim_{t\to\infty} \inf_{|x|\le ct} u(t,x) = 1 \qquad \text{for each $c<2$.}
\end{equation}
This immediately implies that except when $u\equiv 1$, we also have
\begin{equation} \label{9.1}
\lim_{t\to -\infty} \sup_{|x|\le c|t|} u(t,x) = 0 \qquad \text{for each $c<2$.}
\end{equation}
Note that the strong maximum principle and $0\le u\le 1$ imply that $0<u<1$ whenever $u\not\equiv 0,1$, and we will assume this from now on.

In their pioneering work \cite{hamnad.mani}, Hamel and Nadirashvili provided a partial characterization of such solutions of \eqref{RxnDiffEq}.  Under the additional hypotheses of $f\in C^2([0,1])$, $f$ being concave, and $f'(1)<0$, they identified all solutions $u:\mathbb R^{d+1}\to(0,1)$ which also satisfy (cf. \eqref{9.1})
\begin{equation} \label{9.2}
\lim_{t\to -\infty} \sup_{|x|\le c|t|} u(t,x) = 0 \qquad \text{for some $c>2$}
\end{equation}
(we will call these {\it Hamel-Nadirashvili solutions}).
They showed that these solutions are naturally parametrized by all finite positive Borel measures supported inside the open unit ball in $\mathbb R^d$.  One of us later showed \cite{Zlatos.kpp} that this infinite-dimensional manifold of solutions, parametrized by Borel measures, also exists without the additional hypotheses from \cite{hamnad.mani} (see Theorem~\ref{ZlatosThm:tf} below), although it is not yet known whether other solutions satisfying \eqref{9.2} can exist in this case.

It follows from \eqref{9.0} and \eqref{9.1} that all entire solutions $0<u<1$ for Fisher-KPP reactions satisfy
\begin{equation} \label{9.3}
\lim_{t\to -\infty} u(t,x) =0 \qquad\text{and} \qquad \lim_{t\to \infty} u(t,x) =1
\end{equation}
locally uniformly.    Our goal here is to study the nature of this transition from 0 to 1.  Aerial footage of forest fires usually shows relatively narrow lines of fire separating burned and unburned areas, and we investigate the question which entire solutions also have this property.  More specifically, which are {\it transition fronts}, defined by Berestycki and Hamel in \cite{berhamel1, berhamel2} (and earlier in some special situations by Matano  \cite{Matano} and Shen  \cite{Shen}); and more generally, which are {\it transition solutions with bounded width}, defined by one of us in \cite{Zlatos.bw}.  Let us now state these definitions.

For any $u$ as above, $t\in\mathbb R$, and $\ep\in [0,1]$ let
\begin{align*}
\Omega_{u,\epsilon}(t) &:=\{x\in\mathbb{R}^d: u(t,x)\ge \epsilon\}, 
\\ \Omega_{u,\epsilon}'(t) & :=\{x\in\mathbb{R}^d: u(t,x)\le \epsilon\}, 
\end{align*}
and for any $E\subseteq \RR^{d}$ and $L>0$ let
\begin{equation*}
B_{L}(E):=\bigcup_{x\in E} B_{L}(x).
\end{equation*}

\begin{defn}\label{DefOfTransitionFront}
Let $0<u<1$ be an entire solution to \eqref{RxnDiffEq}.
	\begin{enumerate}[(i)]
		\item $u$ is  a \textit{transition solution} if it satisfies \eqref{9.3} locally uniformly.
\item  $u$ has \textit{bounded width} if   for each $\epsilon\in\left(0,\frac{1}{2}\right)$ there is $L_{\epsilon}<\infty$ such that 
		\begin{align} \label{1.1}
		\Omega_{u,\epsilon}(t)\subseteq B_{L_{\epsilon}}(\Omega_{u,1-\epsilon}(t)) \qquad \text{for each $t\in\mathbb{R}$.}
		\end{align}
\item  $u$ is a \textit{transition front} if it has bounded width,  for each $\epsilon\in\left(0,\frac{1}{2}\right)$ there is $L_{\epsilon}'<\infty$ such that 
		\begin{align} \label{1.2}
		\Omega_{u,1-\epsilon}'(t)\subseteq B_{L_{\epsilon}'}(\Omega_{u,\epsilon}'(t)) \qquad \text{for each $t\in\mathbb{R}$,}
		\end{align}
and there are $n,L$ such that for any $t\in\mathbb{R}$, there is a union $\Gamma_t$ of at most $n$ rotated continuous graphs in $\RR^d$ which satisfy 
\[
\partial\Omega_{u,1/2}(t)\subseteq B_{L}(\Gamma_t).
\]
	\end{enumerate}

\end{defn}

\textit{Remarks.} 1.  Recall that when $f$ is Fisher-KPP, then all entire solutions $0<u<1$ are transition solutions.
\smallskip

2. A {\it rotated continuous graph} in $\RR^d$ is a rotation of the graph of some continuous function $h:\RR^{d-1}\to \RR$ (which is a subset of $\RR^d$).
\smallskip

3.  The original definition of transition fronts in  \cite{berhamel1, berhamel2} was slightly different from (iii), but the two are equivalent \cite{Zlatos.bw}.
\smallskip

4. In one dimension $d=1$ the set $\Gamma_t$ in (iii) is just a collection of at most $n$ points.  The special case $n=1$ of transition fronts with a {\it single interface} is of particular interest and has recently been studied extensively for various types of reactions  (see, e.g., \cite{berhamel2, DHZ2, hamnad, HamRos, HamRos2, LimZla, MNRR, MRS, Nadin, NRRZ, NolRyz, Shen, Shen1, ShenShen, ShenShen2, TZZ, VakVol, Zlatos.kpp, ZlaGenfronts, ZlaBist}).  These are entire solutions $0<u<1$ satisfying
\begin{equation} \label{9.5}
\lim_{x\to-\infty} u(t, x+x_t)=1 \qquad\text{and}\qquad \lim_{x\to\infty} u(t, x+x_t)=0
\end{equation}
uniformly in $t\in\RR$, where $x_t:=\max \{x\in\RR: u(t,x)=\tfrac 12\}$ (or with 0 and 1 exchanged in \eqref{9.5}).  They were introduced as a generalization of the concept of {\it traveling fronts}, solutions of the form $u(t,x)=U(x-ct)$ for some decreasing front profile $U:\RR\to (0,1)$ with $\lim_{s\to-\infty}U(s)=1$  and $\lim_{s\to\infty}U(s)=0$, and some front speed $c$. (It is well-known that for \eqref{RxnDiffEq} with a Fisher-KPP reaction, these exist if and only if $c\ge 2\sqrt{f'(0)}$.)  Traveling fronts, which were already studied in \cite{kpp,fisheradv}, only exist for homogeneous reactions, and transition fronts are their natural generalization that can exist in both homogeneous and heterogeneous (i.e., $x$-dependent) media.  We discuss recent results concerning transition fronts for homogeneous reactions below.
\smallskip

5.  Solutions satisfying \eqref{1.1} and \eqref{1.2} but not necessarily the closeness-to-graphs condition are said to have {\it doubly bounded width} \cite{Zlatos.bw}.  Our main result (Theorem \ref{ourmainthm}) and its proof remain unchanged when ``transition fronts'' are replaced by ``transition solutions with doubly bounded width''.
\smallskip

It is easily seen that a transition solution $u$ is a transition front if and only if the Hausdorff distance of any two {\it level sets} $\{x\in\RR^d:u(t,x)=\epsilon\}$ of $u$ stays bounded uniformly in time, and the level set $\{x\in\RR^d:u(t,x)=\frac 12\}$ (then also any other) is at each time uniformly close to a uniformly bounded number of time-dependent rotated continuous graphs.  In contrast, $u$ is a transition solution with bounded width if and only if the Hausdorff distance of any two {\it super-level sets} $\Omega_{u,\epsilon}(t)$ of $u$ stays bounded uniformly in time.  

This distinction results in some notable differences.  For instance, transition fronts (and transition solutions with doubly bounded width) satisfy 
\begin{equation} \label{9.4}
\inf_{x\in \RR^{d}} u(t,x)=0\quad\text{and}\quad \sup_{x\in \RR^{d}} u(t,x)=1
\end{equation}
for each $t\in \RR$, while transition solutions with bounded width need not.  Also, transition solutions with bounded width in dimensions $d\ge 2$ may involve dynamics where the invading state $u\approx 1$ first encircles  large regions  where $u\approx 0$ (with their sizes unbounded as $t\to\infty$) and then invades them.  On the other hand,  such solutions cannot be transition fronts (or have doubly bounded width) because, for instance, at some time $t$ there will be a point $x$ with $u(t,x)=\frac 23$ near the center of such a region but points $y$ with $u(t,y)=\frac 13$ will all lie outside of this region (and thus far away from $x$).  Because this phenomenon does occur for various heterogeneous reactions (e.g., for stationary ergodic reactions with short-range correlations), preventing existence of transition fronts in these settings, it is important to study both these classes of solutions to \eqref{RxnDiffEq}.  We refer to \cite{Zlatos.bw} for a more detailed discussion of the relevant issues.

Coming back to the homogeneous equation \eqref{RxnDiffEq} with a Fisher-KPP reaction $f$, the first systematic study of its entire solutions was undertaken in \cite{hamnad,hamnad.mani} under some additional conditions on $f$.  We will use here the following closely related result from  \cite{Zlatos.kpp}, which concerns the main object of our study --- the Hamel-Nadirashvili solutions to \eqref{RxnDiffEq} --- and holds for  general Fisher-KPP reactions.  In order to state it, first recall that if $\mu$ is a positive Borel measure on $\RR^d$, its support $\supp (\mu)$ is the minimal closed set $A$ such that $\mu(A^c)=0$, while its essential support is any Borel set $A$ such that $\mu(A)=\mu(\mathbb{R}^d)$ and $\mu(A^\prime)<\mu(A)$ whenever $A^{\prime}\subseteq A$ and $A\setminus A^{\prime}$ has positive Lebesgue measure. The collection of all essential supports of $\mu$ will be denoted $\esssup(\mu)$.  Following \cite{Zlatos.kpp}, we then define the \textit{convex hull} of  $\mu$  to be
	\begin{align*}
	\ch(\mu):=\bigcap_{A\in\esssup(\mu)}\ch(A),
	\end{align*}
where $\ch(A)$ is the convex hull of the set $A$.  Note that we may have $\ch(\mu)\notin\esssup(\mu)$ \cite{Zlatos.kpp}. Finally, let $B_r$ denote the open ball $B_r(0)\subseteq \RR^d$ with radius $r$ and centered at 0, and let $S^{d-1}:=\partial B_1$.

\begin{thm}[\hskip0.01mm\cite{Zlatos.kpp}]\label{ZlatosThm:tf}
Assume that $f\in C^{1+\gamma}([0,1])$ for some  $\gamma>0$ and satisfies \eqref{hyp:flip}, let $\mu$ be a finite positive non-zero Borel measure on $\mathbb{R}^d$ with $\supp(\mu)\subseteq B_1$, and let 
	\begin{equation}\label{EquationOfV}
	v_{\mu}(t,x):=\int_{B_1} e^{-\xi\cdot x +(|\xi|^2+1)t}\,d\mu(\xi).
	\end{equation}
	\begin{enumerate}[(i)]
		\item There is an increasing function $h:[0,\infty]\rightarrow[0,1)$ with $h(0)=0$, $h^{\prime}(0)=1$ and $\lim\limits_{v\rightarrow\infty}h(v)=1$, and an entire solution $u_{\mu}$ of \eqref{RxnDiffEq} such that $(u_{\mu})_t>0$ and 
		\begin{align}
		h(v_{\mu})\le u_{\mu}\le\min\{v_{\mu},1\}.\label{ComparisonOfVandU}
		\end{align}
		In addition, $u_{\mu}\not\equiv u_{\mu^{\prime}}$ whenever $\mu\ne \mu^{\prime}$.
		\item We have
		\begin{align}\label{Znec}
		\inf_{x\in\mathbb{R}^d} u_{\mu}(t,x)=0\qquad\text{and}\qquad \sup_{x\in\mathbb{R}^d} u_{\mu}(t,x)=1
		\end{align}
		for each $t\in\mathbb{R}$ if and only if $0\not\in \ch(\mu)$.
		\item If $0\not\in \supp(\mu)$, then $u_\mu$ has bounded width.
	\end{enumerate}
\end{thm}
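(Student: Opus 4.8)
\emph{Proof proposal.} The plan is to push everything onto the explicit two-sided bound $h(v_\mu)\le u_\mu\le v_\mu$ from part (i) and then exploit the rigid structure of $v_\mu$ as a superposition of planar exponentials $e^{-\xi\cdot x+(|\xi|^2+1)t}$ whose frequencies $\xi$ are, thanks to $0\notin\supp(\mu)$, bounded away from the origin. I would fix $\epsilon\in(0,\tfrac12)$ and, using that $h$ is increasing with $\lim_{v\to\infty}h(v)=1$, pick $M_\epsilon<\infty$ with $h(M_\epsilon)\ge1-\epsilon$. For every $t$ the sandwich then yields $\Omega_{u_\mu,\epsilon}(t)\subseteq\{x:v_\mu(t,x)\ge\epsilon\}$ (from $u_\mu\le v_\mu$) and $\{x:v_\mu(t,x)\ge M_\epsilon\}\subseteq\Omega_{u_\mu,1-\epsilon}(t)$ (from $u_\mu\ge h(v_\mu)$ and monotonicity of $h$), so it will suffice to produce $L_\epsilon<\infty$, \emph{independent of $t$}, such that each $x$ with $v_\mu(t,x)\ge\epsilon$ lies within distance $L_\epsilon$ of some $y$ with $v_\mu(t,y)\ge M_\epsilon$.

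Next I would set up the geometry. Since $\supp(\mu)$ is a closed subset of the bounded set $B_1$ it is compact, and $0\notin\supp(\mu)$ gives $r_0:=\dist(0,\supp(\mu))>0$. Fix any angle $\theta_0\in(0,\tfrac\pi2)$ and finitely many unit vectors $\omega_1,\dots,\omega_N$, with $N=N(d,\theta_0)<\infty$, whose $\theta_0$-caps cover $S^{d-1}$; set $C_j:=\{\xi\in\RR^d\setminus\{0\}:\xi\cdot\omega_j\ge|\xi|\cos\theta_0\}$. Then $\supp(\mu)\subseteq\bigcup_{j=1}^N C_j$, and $\xi\cdot\omega_j\ge r_0\cos\theta_0=:\delta>0$ for every $\xi\in\supp(\mu)\cap C_j$.

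The core step is a translation estimate. Fix $t$ and $x$ with $v_\mu(t,x)\ge\epsilon$. Splitting $v_\mu(t,x)=\int_{\supp(\mu)}e^{-\xi\cdot x+(|\xi|^2+1)t}\,d\mu(\xi)$ over the cones and using subadditivity, some index $j_0$ (depending on $t,x$) satisfies $\int_{\supp(\mu)\cap C_{j_0}}e^{-\xi\cdot x+(|\xi|^2+1)t}\,d\mu(\xi)\ge\epsilon/N$. Put $L_\epsilon:=\max\{0,\delta^{-1}\ln(NM_\epsilon/\epsilon)\}$ and $y:=x-L_\epsilon\omega_{j_0}$. For $\xi\in\supp(\mu)\cap C_{j_0}$ one has $e^{-\xi\cdot y}=e^{-\xi\cdot x}e^{L_\epsilon\xi\cdot\omega_{j_0}}\ge e^{L_\epsilon\delta}e^{-\xi\cdot x}$, hence
\[
v_\mu(t,y)\ \ge\ \int_{\supp(\mu)\cap C_{j_0}}e^{-\xi\cdot y+(|\xi|^2+1)t}\,d\mu(\xi)\ \ge\ e^{L_\epsilon\delta}\,\frac{\epsilon}{N}\ \ge\ M_\epsilon .
\]
Thus $u_\mu(t,y)\ge h(v_\mu(t,y))\ge h(M_\epsilon)\ge1-\epsilon$, so $y\in\Omega_{u_\mu,1-\epsilon}(t)$ while $|x-y|=L_\epsilon$; since $L_\epsilon$ is independent of $t$, this gives $\Omega_{u_\mu,\epsilon}(t)\subseteq B_{L_\epsilon+1}(\Omega_{u_\mu,1-\epsilon}(t))$ for every $t$, which is \eqref{1.1}.

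I do not expect a genuine obstacle: after the reduction to $v_\mu$ the whole matter is a one-line exponential comparison. The one point requiring care is that the number $N$ of cones must be chosen depending on $d$ and $\theta_0$ alone, so that the pigeonhole step costs only a fixed factor, and that the common prefactor $e^{(|\xi|^2+1)t}$ cancels in every comparison above — this is exactly what makes $L_\epsilon$ uniform in $t$. (One could instead argue via convexity of $x\mapsto\ln v_\mu(t,x)$ together with the pointwise inequality $|\nabla_x\ln v_\mu|^2+\Delta_x\ln v_\mu\ge r_0^2$, but the cone decomposition seems cleanest.)
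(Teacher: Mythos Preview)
Your argument for part~(iii) is correct. Note, however, that the paper does not itself prove Theorem~\ref{ZlatosThm:tf}: it is quoted from \cite{Zlatos.kpp} and used as a black box, so there is no proof in this paper to compare against directly.

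That said, the technique you use --- cover $\supp(\mu)$ by finitely many cones with axes $\omega_j$, pigeonhole to find a cone carrying at least an $\epsilon/N$ share of $v_\mu(t,x)$, and translate against that axis to boost $v_\mu$ by a fixed exponential factor --- is exactly the mechanism the paper employs for the closely related bounded-width claims in Theorem~\ref{ourmainthm}. In Section~\ref{s:(i)} (hypothesis \eqref{h:1'}) the support already sits in a single cone $\mathcal W_\alpha\cap A(\delta,1)$, so one fixed direction $e_d$ suffices and no pigeonhole is needed; your argument is the natural extension of that to general $\supp(\mu)$ with $0\notin\supp(\mu)$. In Proposition~\ref{NegTime} (hypothesis \eqref{h:3}) the paper instead uses a \emph{continuous} family of half-cones $Y_\zeta=\mathcal W_{1/2,\zeta}\cap B_1$ and an averaging/volume-fraction argument in place of your finite pigeonhole; because $0$ may lie in $\supp(\mu)$ there, an additional step (Lemma~\ref{IntegralSmallNear0}) is required to peel off the mass near the origin before the translation estimate can be applied. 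Your hypothesis $0\notin\supp(\mu)$ renders that step unnecessary and lets you work with a fixed $\delta=r_0\cos\theta_0$ throughout, which is why your version is shorter.
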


{\it Remarks.}  1.  If also $f\in C^2([0,1])$, it is concave, and $f'(1)<0$, then \cite[Theorem 1.2]{hamnad.mani} shows that the solutions from (i) are all those entire solutions $0<u<1$ satisfying \eqref{9.2}.  We note that in this case \cite{hamnad.mani} also constructs entire solutions corresponding to some measures supported in $\bar B_1$ but not in $B_1$ (which then do not satisfy \eqref{9.2}), namely those whose restriction to $S^{d-1}$ is a finite sum of Dirac masses.
\footnote{In fact, the measures in \cite{hamnad.mani} are supported in $B_2^c\cup\{\infty\}$ but the map $\xi\mapsto(1+|\xi|^{-2})\xi$ establishes the relevant correspondence between $\bar B_1$ and $B_2^c\cup\{\infty\}$.}
\smallskip

2.  Note that the functions $e^{-\xi\cdot x +(|\xi|^2+1)t}$ and $v_\mu$ from \eqref{EquationOfV} solve the linearization
\[
v_t=\Delta v + v
\]
of \eqref{RxnDiffEq} at $u=0$.  Moreover, if we denote $c_{|\xi|}:=|\xi|+\frac{1}{|\xi|}$ for $\xi\neq 0$, then 
\[
e^{-\xi\cdot x +(|\xi|^2+1)t} = e^{-\xi\cdot x +|\xi|c_{|\xi|}t} = e^{-\xi\cdot (x -\frac \xi{|\xi|}c_{|\xi|}t)}.
\]
So this is an exponential that moves with speed $c_{|\xi|}$ in the direction $\frac\xi{|\xi|}$.
\smallskip

3.  (ii) and \eqref{9.4} show that $0\not\in\ch(\mu)$ is a necessary condition for $u_{\mu}$ to be a transition front.
\smallskip

4. This result, and thus also Theorem \ref{ourmainthm} below, holds for $f$ satisfying \eqref{hyp:flip} which is only Lipschitz, as long as $f(u)\ge  g(u)$ on $[0,1]$ for some $g\in C^{1}([0,1])$  such that  $g(0)=g(1)=0$, $g^{\prime}(0)=1$, $g(u)>0$  and $g^{\prime}(u)\le 1$ on $(0,1)$, and $\int_{0}^{1}\frac{u-g(u)}{u^2}du<\infty$ \cite{Zlatos.kpp}.  We note that if $f\in C^{1+\gamma}([0,1])$ satisfies \eqref{hyp:flip}, then there exists such function $g$ with $g(u)=u-Cu^{1+\gamma}$ for some $C$ and all small $u\ge 0$.
\smallskip

We now turn to our main result, an almost complete characterization of  
 transition fronts as well as  transition solutions with bounded width within the class of the solutions from Theorem \ref{ZlatosThm:tf}.  Recall that if $f\in C^2([0,1])$ is concave and \hbox{$f'(1)<0$}, then this class coincides with the class of Hamel-Nadirashvili solutions.  In one dimension $d=1$ and under these extra hypotheses, a complete characterization of transition fronts among all the solutions from \cite{hamnad.mani} (these are then parametrized by finite positive non-zero Borel measures $\mu$ on the interval $[-1,1]=\bar B_1$) was recently obtained by Hamel and Rossi \cite{HamRos2}. 
They proved that the solution $u_\mu$ is a transition front if and only if $\supp(\mu)\subseteq [-1,0)$ or $\supp(\mu)\subseteq (0,1]$.  In several dimensions, this task is considerably more challenging because the geometry of $B_1$ is more complicated there.  In fact, we are not aware of any relevant previous results for Fisher-KPP reactions.  We note that transition fronts and transition solutions with bounded width for ignition and bistable reactions satisfying very mild hypotheses were proved to increase in time  \cite{berhamel2, Zlatos.bw}, and examples of transition fronts  for homogeneous bistable reactions that are not traveling fronts were recently constructed in \cite{Hamel}.

For $\zeta\in S^{d-1}$ and $\alpha\in[0,1]$, let
\[
\mathcal{W}_{\alpha, \zeta}:=\{x\in\mathbb{R}^d: x\cdot \zeta\ge \alpha|x|\},
\]
which is a closed cone with axis $\zeta$ when $\alpha>0$, while $\mathcal{W}_{0, \zeta}$ is the closed half-space with inner normal $\zeta$.

We will also call an \textit{upright cone}
\begin{align}\label{uprightcone}
\mathcal{W}_{\alpha}:=\mathcal{W}_{\alpha,e_d}=\{x\in\mathbb{R}^d: x_d\ge \alpha |x|\}.
\end{align}

\begin{thm}\label{ourmainthm}
Let $f,\mu,u_\mu$ be as in Theorem \ref{ZlatosThm:tf}. 
\begin{enumerate}[(i)]
\item If  there are $\zeta\in S^{d-1}$ and $\alpha>0$ such that 
\begin{equation}\label{h:1'}\tag{H1}
0\notin\supp(\mu)\subseteq \mathcal{W}_{\al, \zeta}, 
\end{equation}
then $u_{\mu}$ is both a transition front and a transition solution with bounded width. 
\item 
If there are $\zeta\in S^{d-1}$ and $\alpha>0$ such that 
\begin{equation*}\label{h:2}\tag{H2}
0\in \supp(\mu)\subseteq \mathcal{W}_{\alpha,\zeta}, 
\end{equation*}
 then $u_{\mu}$ is neither a transition front nor a transition solution with bounded width. 
\item If 
\begin{equation*}\label{h:3}\tag{H3}
\supp(\mu)\not\subseteq \mathcal{W}_{0, \zeta}\quad\text{for each } \zeta\in S^{d-1},
\end{equation*}
then $u_{\mu}$ is a transition solution with bounded width but not a transition front. 
\end{enumerate}
\end{thm}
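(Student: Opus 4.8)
The plan is to transfer every statement about the super- and sub-level sets of $u_\mu$ to one about those of the explicit function $v_\mu$ in \eqref{EquationOfV}. From \eqref{ComparisonOfVandU} and the properties of $h$ (increasing, $h(0)=0$, $h(v)\le\min\{v,1\}$, $h(v)\to1$) one gets, for each $c\in(0,1)$ and $t\in\RR$,
\[
\{v_\mu(t,\cdot)\ge h^{-1}(c)\}\subseteq\Omega_{u_\mu,c}(t)\subseteq\{v_\mu(t,\cdot)\ge c\},\qquad \{v_\mu(t,\cdot)\le c\}\subseteq\Omega_{u_\mu,c}'(t)\subseteq\{v_\mu(t,\cdot)\le h^{-1}(c)\}.
\]
Since \eqref{1.1}, \eqref{1.2} and the graph condition in Definition~\ref{DefOfTransitionFront} only get harder as $\epsilon$ decreases, it suffices to treat small $\epsilon$, for which $\epsilon<h^{-1}(1-\epsilon)<1$; everything then reduces to the level sets of $v_\mu$, which is $\log$-convex in $x$, strictly increasing in $t$, and solves $v_t=\Delta v+v$. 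I would also record at the outset the elementary facts that (H3) is equivalent to $0\in\interior\ch(\supp\mu)$ (Hahn--Banach separation) and, via the defining property of essential supports, forces $0\in\ch(\mu)$; and that in (H1), (H2) we may take $\zeta=e_d$ and write $x=(x',x_d)$.

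\textbf{Part (i).} Here $\delta_0:=\dist(0,\supp\mu)>0$, and (H1) gives $\xi_d\ge\alpha|\xi|\ge\alpha\delta_0>0$ and $|\xi'|\le\tfrac{\sqrt{1-\alpha^2}}{\alpha}\,\xi_d$ for $\xi\in\supp\mu$. Differentiating under the integral, $\partial_{x_d}\log v_\mu\le-\alpha\delta_0$, so $v_\mu(t,\cdot)$ is strictly decreasing in $x_d$, with $v_\mu\to0$ as $x_d\to+\infty$ and $v_\mu\to\infty$ as $x_d\to-\infty$; hence $\{v_\mu(t,\cdot)\ge c\}$ is exactly the region below the graph of a finite function $G_c(t,\cdot)\colon\RR^{d-1}\to\RR$, and $|\nabla_{x'}G_c|\le\tfrac{\sqrt{1-\alpha^2}}{\alpha}$ (from $|\int\xi'e^{\phi_\xi}d\mu|\le\tfrac{\sqrt{1-\alpha^2}}{\alpha}\int\xi_de^{\phi_\xi}d\mu$, with $\phi_\xi(t,x):=-\xi\cdot x+(|\xi|^2+1)t$). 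The same bound gives $G_{c_1}(t,\cdot)-G_{c_2}(t,\cdot)\le\tfrac{1}{\alpha\delta_0}\log(c_2/c_1)$ for $c_1<c_2$, so all level sets of $v_\mu$ between two fixed positive levels lie within bounded vertical distance. Feeding the displayed sandwich into this yields at once bounded width (reproving Theorem~\ref{ZlatosThm:tf}(iii) in this case), the inclusion \eqref{1.2}, and that $\partial\Omega_{u_\mu,1/2}(t)$ lies between the two Lipschitz graphs of $G_{1/2}(t,\cdot)$ and $G_{h^{-1}(1/2)}(t,\cdot)$, hence within bounded distance of the single continuous graph of $G_{1/2}(t,\cdot)$; so $u_\mu$ is a transition front.

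\textbf{Part (ii).} A transition front has bounded width, so it suffices to show $u_\mu$ does not. If $\mu=c\delta_0$, then $u_\mu$ is $x$-independent and at times with $\epsilon\le u_\mu<1-\epsilon$ we have $\Omega_{u_\mu,\epsilon}(t)=\RR^d\not\subseteq\emptyset=B_L(\Omega_{u_\mu,1-\epsilon}(t))$. In general, $0\in\supp\mu$ means each $\mu|_{B_\rho}$ is nonzero with $v_{\mu|_{B_\rho}}(t,x)=(1+o_\rho(1))e^{t}\!\int_{B_\rho}e^{-\xi\cdot x}d\mu$ uniformly for $|x|\le\rho^{-1/2}$, so $v_\mu$ is nearly flat on balls of radius $\gg1$ in the region ahead of the main front — a front traveling at bounded speed since $\supp(\mu|_{B_\rho^c})$ is bounded away from $0$ and, by the cone condition, $v_\mu(t,\cdot)$ is monotone in $x_d$. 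Choosing $t$ so that $v_\mu$ equals $h^{-1}(\epsilon)$ at a suitable point $x_t$ of that region forces $v_\mu(t,\cdot)<1-\epsilon$ on a ball $B_R(x_t)$ with $R\to\infty$ as $\rho\to0$; then $x_t\in\Omega_{u_\mu,\epsilon}(t)$ while $\dist(x_t,\Omega_{u_\mu,1-\epsilon}(t))\ge R$, contradicting \eqref{1.1}.

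\textbf{Part (iii) and the main obstacle.} Since $0\in\ch(\mu)$, Theorem~\ref{ZlatosThm:tf}(ii) and Remark~3 following it show $u_\mu$ is not a transition front. For bounded width: if $0\notin\supp\mu$ this is Theorem~\ref{ZlatosThm:tf}(iii); otherwise, using $0\in\interior\ch(\supp\mu)$, finitely many sub-measures $\mu|_{B_r(\eta_j)}$ whose barycenters still surround $0$, together with Jensen's inequality, give $v_\mu(t,x)\ge c_0e^{c_1t+\delta_1|x|}$. Hence each $\{v_\mu(t,\cdot)<c\}$ is bounded and convex ($\log$-convexity), $m(t):=\min_{\RR^d}v_\mu(t,\cdot)$ is increasing with $c_0e^{c_1t}\le m(t)\le\mu(\RR^d)e^t$, and $|x-x^*(t)|\lesssim\log(1/m(t))$ for $x\in\{v_\mu(t,\cdot)<h^{-1}(1-\epsilon)\}$, where $x^*(t)$ is the minimizer. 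Then \eqref{1.1} holds: it is trivial when $m(t)\ge h^{-1}(1-\epsilon)$; for the bounded set of times with $m(t)\in[\epsilon,h^{-1}(1-\epsilon))$ it follows from boundedness of $\{v_\mu(t,\cdot)<h^{-1}(1-\epsilon)\}$; and when $m(t)<\epsilon$, moving from any $x$ with $\epsilon\le v_\mu(t,x)<h^{-1}(1-\epsilon)$ in the direction $\widehat{x-x^*(t)}$ increases $\log v_\mu$ at rate $\ge\log(\epsilon/m(t))/|x-x^*(t)|$ (Jensen via convexity), so — the numerator and $|x-x^*(t)|$ being comparable as $t\to-\infty$ — a bounded displacement reaches $\{v_\mu(t,\cdot)\ge h^{-1}(1-\epsilon)\}\subseteq\Omega_{u_\mu,1-\epsilon}(t)$. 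I expect the main work to be exactly these uniform-in-$t$ level-set estimates in (ii) and (iii): quantifying the flatness of the $\xi\approx0$ tail of $v_\mu$ for a general (possibly non-atomic) $\mu$ in all dimensions, and bounding the width of the annulus $\{\epsilon\le v_\mu(t,\cdot)<h^{-1}(1-\epsilon)\}$ uniformly in time, including the moderate-time regime where mode domination is weak and the convex sets $\{v_\mu(t,\cdot)<c\}$ may be centered near a level set.
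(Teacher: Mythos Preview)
Your reduction to $v_\mu$ and your treatment of (i) are essentially the paper's proof, with the added observation that the level sets of $v_\mu$ are Lipschitz (the paper only uses that they are graphs).

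In (ii) your intuition is correct but the flatness claim as written is not: since $v_{\mu|_{B_\rho}}(t,x)=e^t\int_{B_\rho}e^{-\xi\cdot x+|\xi|^2t}\,d\mu$, the asserted $(1+o_\rho(1))$ error against $e^t\int_{B_\rho}e^{-\xi\cdot x}\,d\mu$ is really $O(\rho^2|t|)$, which blows up as $t\to\infty$. The usable statement is simply $|\nabla_x\log v_{\mu|_{B_\rho}}|\le\rho$ for all $(t,x)$, which is exactly what the paper exploits (it writes $v_\mu=v_1+v_2$ with $v_2=v_{\mu|_{B_\delta}}$ and uses $|\nabla v_2|\le\delta v_2$). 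The step you leave unspecified---and which is the heart of the paper's argument---is \emph{locating} the point $x_t$ ``ahead of the main front'' with $v_\mu(t,x_t)=h^{-1}(\epsilon)$. The paper does this by a speed comparison along the $e_d$-axis: the $v_1$-front travels at most like $\tfrac{2}{\alpha}c_\delta t$, while for $\delta'$ small enough that $c_{\delta'}\ge\tfrac 3\alpha c_\delta$ the $v_{\mu|_{B_{\delta'}}}$-part already blows up at that height, forcing the point $X(t)$ on the axis with $v_\mu(t,X(t))=h^{-1}(\epsilon)$ to satisfy $v_1(t,X(t))\to 0$. Once this is established, the $\delta$-flatness of $v_2$ finishes the argument as you outline.

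For (iii), your proof that $u_\mu$ is not a transition front matches the paper's (the paper formalizes ``forces $0\in\ch(\mu)$'' via a finite Steinitz-type selection from $\supp\mu$ and a perturbation lemma for convex hulls). Your bounded-width argument, however, is \emph{genuinely different}. The paper never invokes $x^*(t)$ or log-convexity; instead it shows (a) $u_\mu(t,x)\ge 1-\epsilon$ whenever $t\le-T$ and $|x|\ge K|t|$, using a uniform lower bound on $\mu$ over truncated cones $C_{N,\zeta}=\interior(\mathcal W_{N^{-1},-\zeta}\cap A(N^{-1},1))$ pointing in every direction; (b) consequently, wherever $u_\mu<1-\epsilon$ and $t\ll 0$ one has $|x|<K|t|$, forcing the $B_{\delta_a}$-contribution to $v_\mu$ to be at most any prescribed $a$; and (c) hence at any $x$ with $v_\mu(t,x)\ge\epsilon$ a fixed fraction of the integral lies in some half-cone $Y_\zeta\cap A(\delta_a,1)$, and translating by a fixed amount in direction $-\zeta$ amplifies that part by $e^{L_\epsilon^-\delta_a/2}$. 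Your convexity route is cleaner in that it avoids this directional decomposition, and it works, but your case split needs a small repair: in the regime ``$m(t)<\epsilon$'' the rate $\log(\epsilon/m(t))/|x-x^*(t)|$ is not bounded below as $m(t)\uparrow\epsilon$. Use instead the threshold $m(t)<\epsilon/2$ (absorbing the extra bounded time-interval into your middle case); then $\log(\epsilon/m(t))/\log(1/m(t))\ge\log 2/\log(2/\epsilon)$, which combined with your bound $|x-x^*(t)|\lesssim\log(1/m(t))$ gives a uniform positive lower bound on the rate.
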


Notice that the only cases of measures from Theorem \ref{ZlatosThm:tf} not covered by this result are those supported in some half-space $\mathcal{W}_{0, \zeta}$ but not in any cone $\mathcal{W}_{\alpha,\zeta}$ with $\alpha>0$.  We can still say something in this case:  if $0\notin\supp(\mu)$, then Theorem \ref{ZlatosThm:tf}(iii) shows that $u_\mu$ is a transition solution with bounded width, and we also conjecture that $u_\mu$ is not a transition front.  However, if $0\in\supp(\mu)$, then determining whether $u_\mu$ is a transition front and/or a transition solution with bounded width will likely be a very delicate question.

We prove the three parts of  Theorem \ref {ourmainthm} in the following three sections,leaving some technical lemmas for the Appendix.

\smallskip

{\bf Acknowledgements:} AZ was supported in part by NSF grants DMS-1147523, DMS-1656269, and DMS-1652284.  AA and JL were supported in part by NSF grant DMS-1147523.  ZH and ZT were supported in part by NSF grant DMS-1656269.  AA, ZH, and ZT gratefully acknowledge the hospitality of the Department of Mathematics at the University of Wisconsin--Madison during the REU ``Differential Equations and Applied Mathematics'', where this research  originated.

\section{Proof of Theorem \ref{ourmainthm}(i)}\label{s:(i)}

We may assume without loss of generality that  $\zeta=e_d$, so that the cone $\mathcal{W}_{\al, \zeta}=\mathcal{W}_{\al}$ is upright.  Then \eqref{h:1'} implies there is $\delta>0$ such that 
\begin{equation}\label{h:1} 
\supp(\mu)\subseteq \mathcal{W}_{\al} \cap A(\delta, 1),
\end{equation}
with $A(r_1,r_2):=B_{r_2}\setminus B_{r_1}$ an annulus.
In particular,  
\begin{equation*}
 \inf\{x_d:x\in\supp(\mu)\}\ge \alpha\delta>0.
\end{equation*}	
Let us first show that $u_{\mu}$ has bounded width (recall that each $u_\mu$ is a transition solution). This follows immediately from Theorem \ref{ZlatosThm:tf}
but our argument will also be  useful in the proof that $u_\mu$ is a transition front.
Let $\epsilon\in\left(0,\frac{1}{2}\right)$ and $x\in \Omega_{u_{\mu},\epsilon}(t)$, and define $s:=(\alpha\delta)^{-1}\ln(h^{-1}(1-\epsilon)/\epsilon)\geq 0$ and $x_s:=x-se_d$.  Here $h$ is the $\mu$-dependent function from Theorem \ref{ZlatosThm:tf}(i).
From \eqref{h:1}  we have
\[
	v_{\mu}(t,x_s)
	=\int_{B_1} e^{s(\xi\cdot e_d)} e^{-\xi\cdot x+(|\xi|^2+1)t}\,d\mu(\xi)
	\ge e^{s \alpha\delta}\int_{B_1}e^{-\xi\cdot x+(|\xi|^2+1)t}\,d\mu(\xi),
\]
so the definition of $s$ and  \eqref{ComparisonOfVandU} yield
\[	
	v_{\mu}(t,x_s)\ge \frac{h^{-1}(1-\epsilon)}\epsilon v_{\mu}(t,x) \ge  \frac{h^{-1}(1-\epsilon)}\epsilon u_{\mu}(t,x)\ge  h^{-1}(1-\epsilon).
\]
From \eqref{ComparisonOfVandU} we now have
$x_s\in \Omega_{u_{\mu},1-\epsilon}(t)$, so \eqref{1.1} with $u=u_\mu$ holds
for each $t\in\RR$ and $L_\epsilon:=s+1$.  
Hence $u_\mu$ is a transition solution with  bounded width.
	
The verification of \eqref{1.2} 
for $u_\mu$ is analogous.
If $\ep\in\left(0,\frac{1}{2}\right)$ and $x\in \Omega_{u_{\mu},1-\epsilon}'(t)$, then the above argument for $x_s:=x+se_d$ yields
\[	
	v_{\mu}(t,x_s)\le \frac \epsilon{h^{-1}(1-\epsilon)} v_{\mu}(t,x) \le  \frac \epsilon{h^{-1}(1-\epsilon)} h^{-1}(u_{\mu}(t,x))\le   \epsilon.
\]
From \eqref{ComparisonOfVandU} we now have

$x_s\in\Omega_{u_{\mu},\epsilon}'(t)$, so \eqref{1.2} with $u=u_\mu$ holds for each $t\in\RR$ and $L_\epsilon':=L_\epsilon$.

	Finally,  the last claim in Definition \ref{DefOfTransitionFront}(iii) is satisfied with  $\Gamma_t:=\{x\in\RR^d:v_\mu(t,x)=\frac 12\}$ (which is a graph of a function of $(x_1,\dots,x_{d-1})$ because $\supp(\mu)\subseteq \mathcal{W}_{\al}\cap A(\delta,1)$ implies $(v_\mu)_{x_d}<0$) and any $L> L_{h(1/2)}$. 

	Indeed, if $u_\mu(t,x)=\frac 12$, then \eqref{ComparisonOfVandU} and the above arguments show that $v_\mu(t,x)\ge \frac 12$ as well as 
\[
v_\mu(t,x+L_{h(1/2)}e_d)\le h^{-1}\left(u_\mu(t,x+L_{h(1/2)}e_d)\right)\le h^{-1} \left(h\left(\frac 12\right) \right) =\frac 12.
\]
Hence there is $l\in[0,L_{h(1/2)}]$ such that $x+le_d\in\Gamma_t$,
and it follows that $u_{\mu}$ is indeed a transition front.

\section{Proof of Theorem \ref{ourmainthm}(ii)}\label{s:(ii)}

We again assume without loss that  $\zeta=e_d$, so the cone $\mathcal{W}_{\al, \zeta}=\mathcal{W}_{\al}$ is upright, and let $h$ be the $\mu$-dependent function from Theorem \ref{ZlatosThm:tf}(i).
We will now show that the width of the transition zone of $u_{\mu}$ becomes unbounded as $t\to\infty$, violating Definition \ref{DefOfTransitionFront}(ii). Thus, $u_{\mu}$ is neither a transition solution with bounded width nor a transition front.

First consider the case $\mu(\{0\})>0$ and let $t_{0}:= \ln (2\mu(\{0\}))$.  Then from the Lebesgue dominated convergence theorem, we have 
\[
\lim_{x_d\to \infty} v_\mu(-t_0,x)= \mu(\{0\}) e^{-t_0} \qquad (\le v_{\mu}(-t_0, x) \text{ for all $x\in\mathbb R^d$})
\]
uniformly in $(x_1,\dots,x_{d-1})$.  This and Theorem \ref{ZlatosThm:tf}(i) show that there is $M<\infty$ such that $u_{\mu}(-t_0,x)\in[h(\mu(\{0\}) e^{-t_0}),\frac 12]$ whenever $|x_d|>M$.  Thus $L_{h(\mu(\{0\}) e^{-t_0})}$ from \eqref{1.1} with $u=u_\mu$
cannot be finite and we are done.

Let us now assume $\mu(\{0\})=0$, and fix any $\eps\in(0,\frac 14)$ such that $h^{-1}(\eps)\le \frac 14$.  For each $t\in\mathbb R$, let $X(t)=(0,\dots,0, s_{t})$ be such that $v_\mu(t,X(t))=h^{-1}(\eps)$.  This point is unique because $\supp(\mu)\subseteq \mathcal{W}_{\al}$ and $\mu(\{0\})=0$ imply $(v_\mu)_{x_d}<0$.  

Fix any $\delta\in(0,1)$ and let $\delta'\in(0,\delta)$ be such that $c_{\delta'}\ge \frac 3\alpha c_\delta$.  For instance, $\delta'=\frac{\alpha\delta}6$ works.  Next let
\begin{align*}
v_1(t,x) &:=\int_{A(\delta,1)} e^{-\xi\cdot x +|\xi|c_{|\xi|}t}\,d\mu(\xi), \\
v_2(t,x) &:=\int_{B_\delta} e^{-\xi\cdot x +|\xi|c_{|\xi|}t}\,d\mu(\xi),  \\
v_3(t,x) &:=\int_{B_{\delta'}} e^{-\xi\cdot x +|\xi|c_{|\xi|}t}\,d\mu(\xi),  
\end{align*}
so that $v_\mu=v_1+v_2$.  Note also that  $(v_j)_{x_d}<0$ for $j=1,2,3$.

Let now $r_t:=\frac 2\alpha c_\delta t$ and $Y(t)=(0,\dots,0, r_{t})$.  Then from $c_{|\xi|}$ being decreasing in $|\xi|\in(0,1]$, we obtain for any $\xi\in \mathcal{W}_{\al}\cap A(\delta,1)$ and $t\ge 0$,
\[
-\xi\cdot Y(t)+|\xi|c_{|\xi|} t \le -|\xi|(\alpha r_t -c_\delta t) \le -\delta c_\delta t \le -t.
\]
On the other hand, for $\xi\in \mathcal{W}_{\al}\cap B_{\delta'}$ and $t>0$ we obtain
\[
-\xi\cdot Y(t)+|\xi|c_{|\xi|} t \ge |\xi|( c_{\delta'} t-r_t) \ge \frac{|\xi| c_\delta}\alpha t.
\]
From these, $\mu([\mathcal{W}_{\al}\cap B_{\delta'}]\setminus\{0\})>0$, and the Lebesgue dominated convergence theorem it follows that 
\[
\lim_{t\to\infty} v_1(t,Y(t))=0 \qquad\text{and}\qquad \lim_{t\to\infty} v_3(t,Y(t))=\infty.
\]
Therefore $s_t>r_t$ and $\lim_{t\to\infty} v_1(t,X(t))=0$.  But then from $v_\mu=v_1+v_2$, $|\nabla v_1|\le v_1$, $|\nabla v_2|\le \delta v_2$, and $v_\mu(t,X(t))\le \frac 14$ it follows that 
\[
\lim_{t\to\infty} \sup_{y\in B_{\delta^{-1}\ln 2}} v_\mu(t,X(t)+y) \le \frac 12.
\]
Then since $v_\mu(t,X(t))=h^{-1}(\eps)$, applying Theorem \ref{ZlatosThm:tf}(i) shows that $u_\mu(t,X(t))\ge \eps$ and 
\[
\lim_{t\to\infty} \sup_{y\in B_{\delta^{-1}\ln 2}} u_\mu(t,X(t)+y) \le \frac 12.
\]
This shows that $L_\eps$ from \eqref{1.1} with $u=u_\mu$
must satisfy $L_\eps\ge \delta^{-1}\ln 2$.  Since $\delta>0$ was arbitrary, such $L_\eps<\infty$ cannot exist and we are done.

\section{Proof of Theorem \ref{ourmainthm}(iii)}\label{s:(iii)}

Throughout this section, $\interior(E)$  and $\partial E$ denote the interior and boundary of a set $E\subseteq \RR^{d}$.
We split the proof in two parts.

\subsection{Proof that \texorpdfstring{$u_\mu$}{} is not a transition front}
This follows immediately from Theorem \ref{ZlatosThm:tf}(ii) and the following result.

\begin{prop}\label{ReformulateNecessaryCondition}
	If $\mu$ satisfies \eqref{h:3}, then $0\in\ch(\mu)$.
	\end{prop}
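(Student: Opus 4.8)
The goal is to show that the condition \eqref{h:3} --- namely that $\supp(\mu)$ is not contained in any closed half-space through the origin --- forces $0$ to lie in $\ch(\mu) = \bigcap_{A\in\esssup(\mu)}\ch(A)$. The plan is to prove the contrapositive: if $0\notin\ch(\mu)$, then there is some half-space $\mathcal{W}_{0,\zeta}$ containing $\supp(\mu)$. So assume $0\notin\ch(\mu)$; then there is some essential support $A\in\esssup(\mu)$ with $0\notin\ch(A)$. Since $\ch(A)$ is a convex set not containing the origin, the separating hyperplane theorem yields a unit vector $\zeta$ and a constant $\beta\ge 0$ with $x\cdot\zeta\ge\beta$ for all $x\in\ch(A)$, hence $x\cdot\zeta\ge 0$ for all $x\in A$, i.e. $A\subseteq\mathcal{W}_{0,\zeta}$.

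\textbf{From $A$ to $\supp(\mu)$.} The remaining issue is that $A$ is only an essential support, so a priori $\supp(\mu)$ could stick out of $\mathcal{W}_{0,\zeta}$ on a Lebesgue-null set. First I would recall that $\overline{A'}\supseteq\supp(\mu)$ for \emph{any} essential support $A'$: indeed if some $x_0\in\supp(\mu)$ had a ball $B_r(x_0)$ disjoint from $A'$, then $\mu(B_r(x_0))\le\mu((A')^c)=0$, contradicting $x_0\in\supp(\mu)$. Applying this with $A'=A$ gives $\supp(\mu)\subseteq\overline{A}\subseteq\overline{\mathcal{W}_{0,\zeta}}=\mathcal{W}_{0,\zeta}$, since $\mathcal{W}_{0,\zeta}$ is already closed. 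This contradicts \eqref{h:3}, completing the contrapositive. So the logical skeleton is short: (1) negate the conclusion to get a good essential support; (2) separate its convex hull from the origin; (3) pass back to $\supp(\mu)$ via the closure argument; (4) contradict \eqref{h:3}.

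\textbf{The main obstacle.} The one point requiring genuine care is step (1): extracting, from $0\notin\ch(\mu)=\bigcap_{A}\ch(A)$, a \emph{single} essential support $A$ with $0\notin\ch(A)$. The intersection being a proper subset not containing $0$ does not by itself hand over one offending $A$ --- in principle $0$ could fail to be in the intersection only because it is excluded by different $A$'s "in different directions." However, $0\notin\bigcap_A\ch(A)$ means precisely that there \emph{exists} $A\in\esssup(\mu)$ with $0\notin\ch(A)$; this is just the definition of intersection. So there is really no subtlety here once one unwinds the definition --- the intersection omits $0$ iff some member omits $0$. The only thing to double-check is that $\esssup(\mu)$ is nonempty, which holds because $\supp(\mu)$ itself (being the minimal closed full-measure set) is an essential support, or one can take $\supp(\mu)$ directly; since $\supp(\mu)\subseteq B_1$ is bounded and $\mu$ is finite and nonzero, $\esssup(\mu)\ne\emptyset$.

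\textbf{Remark on directness.} Alternatively, one can argue directly rather than by contraposition: given \eqref{h:3}, suppose toward a contradiction that $0\notin\ch(\mu)$; then by the above there is $A\in\esssup(\mu)$ with $0\notin\ch(A)$, separate to get $A\subseteq\mathcal{W}_{0,\zeta}$, deduce $\supp(\mu)\subseteq\overline{A}\subseteq\mathcal{W}_{0,\zeta}$ via the closure argument, and this contradicts \eqref{h:3}. Either phrasing works; I would likely present the direct-contradiction version since it keeps the hypothesis \eqref{h:3} in view throughout. I expect the whole proof to be under half a page.
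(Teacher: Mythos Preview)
Your argument is correct and takes a genuinely different, more elementary route than the paper's. The paper proves the result \emph{directly}: it first shows $0\in\interior(\ch(\supp(\mu)))$ (via Lemma~\ref{ImportantCor} and a supporting-hyperplane argument), then invokes Gustin's theorem (Lemma~\ref{VeryUsefulTheorem}) to extract finitely many points $x_1,\dots,x_k\in\supp(\mu)$ with $0\in\interior(\ch(\{x_i\}))$, and finally applies a perturbation-stability result (Proposition~\ref{ConvexHullWiggle}, proved in the Appendix with two auxiliary lemmas and a Cramer's-rule computation) to conclude that every essential support $A$ contains nearby points $y_i$ with $0\in\ch(\{y_i\})\subseteq\ch(A)$. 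Your contrapositive bypasses all of this machinery: once $0\notin\ch(A)$ for some essential support $A$, weak separation gives $A\subseteq\mathcal W_{0,\zeta}$, and the elementary inclusion $\supp(\mu)\subseteq\overline A$ (which follows immediately from $\mu(A^c)=0$) then forces $\supp(\mu)\subseteq\mathcal W_{0,\zeta}$. This eliminates the need for Lemmas~\ref{ImportantCor} and~\ref{VeryUsefulTheorem}, Proposition~\ref{ConvexHullWiggle}, and the entire Appendix. The paper's route does yield the slightly stronger intermediate fact $0\in\interior(\ch(\supp(\mu)))$, but that is not used elsewhere.

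One small correction: your parenthetical that ``$\supp(\mu)$ itself is an essential support'' is not true in general under the paper's definition. For instance, if $\mu$ is purely atomic with atoms dense in a set of positive Lebesgue measure (say $\mu=\sum 2^{-n}\delta_{q_n}$ with $\{q_n\}$ dense in $B_{1/2}$), then $\supp(\mu)$ contains subsets of positive Lebesgue measure but zero $\mu$-measure, violating the minimality clause. This does not damage your argument, however: essential supports always exist for finite Borel measures (take $\{g>0\}\cup S$, where $g$ is the Radon--Nikodym density of the absolutely continuous part and $S$ is a Lebesgue-null carrier of the singular part), and in any case if $\esssup(\mu)$ were empty the intersection defining $\ch(\mu)$ would be all of $\mathbb R^d$, so $0\in\ch(\mu)$ trivially. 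You should also make sure to invoke the version of the separation theorem that gives \emph{weak} separation of a point from an arbitrary (not necessarily closed) convex set; this is standard, but worth stating carefully since $\ch(A)$ need not be closed.
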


The proof of Proposition \ref{ReformulateNecessaryCondition} uses several results from convex analysis:

\begin{lem}[Section 9, Chapter 6, Theorem 3 in \cite{conv}]\label{ImportantCor}
	Let $S\subseteq \mathbb{R}^d$ be a nonempty compact set. Then $0\not\in\ch(S)$ if and only if there exists an $\zeta\in S^{d-1}$ such that $S\subseteq \interior(\mathcal{W}_{0, \zeta})$. 
\end{lem}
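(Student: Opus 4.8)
The plan is to reduce the claim for the compact set $S$ to the same claim for its convex hull $\ch(S)$, and then to establish the separation for a nonempty closed convex set by the classical nearest-point projection onto the origin.

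First I would record two elementary reductions. Since $S\subseteq\ch(S)$ and the open half-space $\interior(\mathcal{W}_{0,\zeta})=\{x\in\mathbb{R}^d:x\cdot\zeta>0\}$ is convex, the inclusion $S\subseteq\interior(\mathcal{W}_{0,\zeta})$ holds if and only if $\ch(S)\subseteq\interior(\mathcal{W}_{0,\zeta})$, because the open half-space then contains the smallest convex set containing $S$. Thus it suffices to show that $0\notin\ch(S)$ if and only if there is $\zeta\in S^{d-1}$ with $x\cdot\zeta>0$ for every $x\in\ch(S)$. I would also note that, because $S$ is compact in $\mathbb{R}^d$, Carath\'eodory's theorem realizes $\ch(S)$ as a continuous image of a compact set (convex combinations of at most $d+1$ points of $S$), so that $\ch(S)$ is itself compact and in particular closed.

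For the substantive direction, assume $0\notin\ch(S)$. Since $\ch(S)$ is a nonempty closed convex set, there is a nearest point $a\in\ch(S)$ to the origin, and $\|a\|>0$ because $0\notin\ch(S)$. For any $x\in\ch(S)$ and $\lambda\in(0,1)$, convexity gives $\lambda x+(1-\lambda)a\in\ch(S)$, so minimality of $\|a\|$ yields
\[
0\le \|\lambda x+(1-\lambda)a\|^2-\|a\|^2=\lambda^2\|x-a\|^2+2\lambda\,(x-a)\cdot a.
\]
Dividing by $\lambda$ and letting $\lambda\searrow 0$ gives $(x-a)\cdot a\ge 0$, i.e.\ $x\cdot a\ge\|a\|^2>0$, so with $\zeta:=a/\|a\|\in S^{d-1}$ we obtain $x\cdot\zeta\ge\|a\|>0$ for all $x\in\ch(S)$. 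The converse is immediate: if $x\cdot\zeta>0$ for all $x\in S$, then by the first reduction the same holds for all $x\in\ch(S)$, and since $0\cdot\zeta=0$ the origin cannot lie in $\ch(S)$.

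The only step requiring genuine care is the closedness of $\ch(S)$: the nearest point $a$ need not exist for an arbitrary convex set, and it is precisely the compactness hypothesis on $S$ (via Carath\'eodory) that guarantees $\ch(S)$ is closed and that the projection is attained. Everything else is the standard supporting-hyperplane computation, so I expect this reduction to a closed convex hull to be the main point to get right.
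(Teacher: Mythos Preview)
Your proof is correct and follows the standard nearest-point projection argument for separating a point from a closed convex set; the paper does not supply its own proof of this lemma but cites it as a textbook result, and your argument is precisely that classical one (reduce to $\ch(S)$, use compactness of $\ch(S)$ via Carath\'eodory, then project the origin onto the closed convex set).
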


\begin{lem}[Theorem $\Delta_{n}$ in \cite{gustin}] \label{VeryUsefulTheorem}
	If $S\subseteq \mathbb{R}^d$ and $x\in\interior(\ch(S))$, then there is  $S^*\subseteq S$ such that $\card(S^*)\le 2d$ and $x\in\interior(\ch(S^*))$.
\end{lem}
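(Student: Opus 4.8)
The plan is to recognize this as the interior (Steinitz) analogue of Carathéodory's theorem and to prove it in three moves: (a) reduce to a finite set, (b) recast interiority as positive spanning, and (c) prune a positively spanning set down to $2d$ points. First I would translate so that $x=0$. Since $0\in\interior(\ch(S))$ there is $\rho>0$ with $B_\rho\subseteq\ch(S)$, so the $2d$ points $\pm\rho e_1,\dots,\pm\rho e_d$ all lie in $\ch(S)$; applying ordinary Carathéodory to each expresses it as a convex combination of at most $d+1$ points of $S$, and the (finite) union $T$ of all these points satisfies $\ch(T)\supseteq\ch\{\pm\rho e_j\}$, the scaled cross-polytope, whence $0\in\interior(\ch(T))$. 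Thus it suffices to prove the claim for a finite set, i.e.\ to extract $S^*\subseteq T$ with $\card(S^*)\le 2d$ and $0\in\interior(\ch(S^*))$.

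The key reformulation, via the support function $h(u)=\max_{p\in P}u\cdot p$ of $\ch(P)$ for finite $P$, is that $0\in\interior(\ch(P))$ iff $h(u)>0$ for every $u\neq 0$, i.e.\ iff $P$ \emph{positively spans} $\RR^d$: for each $u\neq 0$ some $p\in P$ has $u\cdot p>0$ (compactness of $S^{d-1}$ upgrades strict positivity in all directions to some $B_\rho\subseteq\ch(P)$). Hence the whole problem becomes: every finite positively spanning set contains a positively spanning subset of at most $2d$ points. Equivalently, a \emph{minimal} positively spanning set (one no proper subset of which positively spans) has at most $2d$ elements; granting this, I repeatedly discard points of $T$ that can be dropped without destroying positive spanning until at most $2d$ remain, and translating back yields $S^*$.

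It therefore remains to bound minimal positively spanning sets by $2d$, which is the crux. Let $P=\{p_1,\dots,p_m\}$ be minimal positively spanning. Minimality gives, for each $k$, a functional $u_k$ with $u_k\cdot p_k>0$ and $u_k\cdot p_i\le 0$ for all $i\neq k$ (since $P\setminus\{p_k\}$ fails to positively span it lies in a closed half-space through $0$, which $p_k$ must escape). I would prove by induction on $d$ the structural statement that $P$ decomposes as a disjoint union $P=P_1\cup\dots\cup P_r$ where each $P_j$ positively spans a subspace $L_j$, the $L_j$ are in direct sum $\RR^d=L_1\oplus\dots\oplus L_r$, and each $P_j$ is irreducible with $\card(P_j)=\dim L_j+1$. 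The base case $d=1$ forces $P=\{+,-\}$, of size $2$. For the inductive step I would use a supporting hyperplane $H=u_m^{\perp}$ supplied by the irremovability of $p_m$ to split $P$ according to the sign of $u_m\cdot p_i$, show that the part lying in $H$ forms a minimal positively spanning set of a subspace of dimension $<d$, and apply the inductive hypothesis. Once the decomposition is in hand the count is immediate:
\[
\card(P)=\sum_{j=1}^r(\dim L_j+1)=d+r\le 2d,
\]
because $\sum_j\dim L_j=d$ and each $\dim L_j\ge 1$ force $r\le d$.

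The main obstacle is exactly this structural decomposition of minimal positive bases: verifying that, after splitting along $H$, the in-hyperplane part is again a \emph{minimal} positive basis (not merely a positively spanning set) of the correct lower-dimensional subspace, so that the induction closes with the sharp constant $2d$ rather than a weaker bound such as $3d$ or $m-d-1$. The reductions in the first two paragraphs are routine; all the combinatorial content, together with the sharpness witnessed by the cross-polytope $\{\pm\rho e_j\}$, sits in this step.
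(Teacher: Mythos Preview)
The paper does not supply its own proof of this lemma; it is simply quoted from Gustin's 1947 paper and used as a black box. So there is no in-paper argument to compare against.

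On the merits of your proposal: the reductions (a) and (b) are correct and standard. Carath\'eodory applied to the $2d$ points $\pm\rho e_j$ gives the finite subset $T$, and the support-function computation correctly identifies $0\in\interior(\ch(P))$ with ``$P$ positively spans $\RR^d$'' for finite $P$. The structural decomposition you aim for in (c) is also a true theorem (this is Davis's 1954 characterization of minimal positive bases), and the count $\card(P)=\sum_j(\dim L_j+1)=d+r\le 2d$ is exactly right once the decomposition is in hand.

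The gap is in the inductive mechanism you sketch, and it is slightly more basic than the difficulty you flag. Take $d=2$ and the irreducible minimal positive basis $P=\{e_1,e_2,-e_1-e_2\}$. For $p_3=-e_1-e_2$ one may choose $u_3=(-1,-1)$, and then \emph{no} $p_i$ lies in $H=u_3^{\perp}$; the ``in-hyperplane part'' is empty and there is nothing to feed to the inductive hypothesis, while the two remaining points $e_1,e_2$ on the nonpositive side do not sit in any proper subspace. In general your hyperplane split separates one point from the rest but does not by itself produce a lower-dimensional minimal positive basis. The standard route to the decomposition instead analyzes the cone of positive linear dependences $\{c\in\RR_{\ge 0}^m:\sum_i c_i p_i=0\}$: its extreme rays have supports that are exactly the irreducible blocks $P_j$, each of size $\dim L_j+1$. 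So your overall strategy is sound and would succeed, but the specific induction via $u_m^{\perp}$ needs to be replaced by (or supplemented with) this positive-dependence argument.
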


Finally, we need a technical result concerning stability of the convex hull of a finite set of points, which we prove in the Appendix.

\begin{prop}\label{ConvexHullWiggle}
If $S^* = \{x_1,\dots,x_k\}\subseteq\mathbb{R}^d$ and $0 \in \interior(\ch(S^*))$, then there is $\epsilon>0$ such that for all $y_i\in B_\epsilon(x_i)$, we have $0 \in \ch(\{y_1,\dots,y_k\})$.
\end{prop}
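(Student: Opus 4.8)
The plan is to reduce the statement to a quantitative version of the fact that membership in the interior of a convex hull is an open condition, and then to extract an explicit $\epsilon$ from Carathéodory-type data. First I would set $K:=\ch(S^*)=\ch(\{x_1,\dots,x_k\})$ and use the hypothesis $0\in\interior(K)$ to fix $r>0$ with $\bar B_r\subseteq K$. Since $K$ is a polytope, $0=\sum_{i=1}^k c_i x_i$ for some $c_i\ge 0$ with $\sum_i c_i=1$; more usefully, each point of $\bar B_r$ is such a convex combination, and I want uniform control of the coefficients as the $x_i$ are perturbed.

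The key step is the following continuity estimate. Define $\Phi:(\RR^d)^k\times\Delta_{k-1}\to\RR^d$ by $\Phi(y_1,\dots,y_k;c_1,\dots,c_k)=\sum_i c_i y_i$, where $\Delta_{k-1}$ is the standard simplex. For the original points, $\Phi(x_1,\dots,x_k;\cdot)$ maps $\Delta_{k-1}$ onto $K\supseteq \bar B_r$. If $y_i\in B_\epsilon(x_i)$ for all $i$, then for any $c\in\Delta_{k-1}$ we have $|\Phi(y;c)-\Phi(x;c)|\le\sum_i c_i|y_i-x_i|<\epsilon$. Hence the continuous map $c\mapsto\Phi(y;c)$ from the compact convex set $\Delta_{k-1}$ into $\RR^d$ stays within $\epsilon$ of a map whose image contains $\bar B_r$; I would then invoke a simple topological-degree or Brouwer-type argument to conclude that $0$ lies in the image of $c\mapsto\Phi(y;c)$ as soon as $\epsilon<r$. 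Concretely: the image of $c\mapsto\Phi(y;c)$ is the compact convex set $\ch(\{y_1,\dots,y_k\})$, and if $0$ were not in it, there would be a hyperplane through $0$ with all $y_i$ on one closed side, say $\nu\cdot y_i\ge 0$ for a unit vector $\nu$; but then $\nu\cdot x_i\ge\nu\cdot y_i-\epsilon\ge-\epsilon$ for all $i$, so $\nu\cdot z\ge-\epsilon$ for every $z\in K$, contradicting $-r\nu\in\bar B_r\subseteq K$ once $\epsilon<r$. So any $\epsilon\in(0,r)$ works.

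The main obstacle is simply making sure the separation argument is airtight: one must use that $\ch(\{y_1,\dots,y_k\})$ is compact and convex so that failure of $0\in\ch(\{y_1,\dots,y_k\})$ genuinely produces a strictly (or at least weakly, which suffices here) separating hyperplane through the origin, i.e. a unit $\nu$ with $\nu\cdot y_i\ge 0$ for all $i$. This is the standard separation theorem for a point and a disjoint compact convex set, and the rest is the one-line perturbation estimate above together with the choice $\epsilon=\tfrac r2$. No case analysis on $d$ or $k$ is needed, and the argument does not even require $0\in\interior(\ch(S^*))$ beyond producing the ball $\bar B_r\subseteq\ch(S^*)$.
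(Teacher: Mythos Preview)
Your argument is correct and takes a genuinely different route from the paper's. You use the hyperplane separation theorem: if $0\notin\ch(\{y_1,\dots,y_k\})$, a unit $\nu$ with $\nu\cdot y_i\ge 0$ for all $i$ forces $\nu\cdot z>-\epsilon$ for every $z\in\ch(S^*)$, contradicting $-r\nu\in\bar B_r\subseteq\ch(S^*)$ once $\epsilon<r$. The paper instead proceeds constructively: it first shows (their Lemma~\ref{WiggleLemma1}) that $0=\sum_i c_i x_i$ with all $c_i>0$, then (Lemma~\ref{WiggleLemma2}) that each perturbation $p_j:=y_j-x_j$ can be written as $\sum_i a_{ij}x_i$ with all $|a_{ij}|$ small, and finally uses Cramer's rule on the linear system $(I+(a_{ij}))\Theta=(c_i)$ to produce explicit coefficients $\theta_j>0$ with $\sum_j\theta_j y_j=0$. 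Your separation approach is shorter, needs no auxiliary lemmas, and yields the clean quantitative choice $\epsilon<r$ where $\bar B_r\subseteq\ch(S^*)$; the paper's approach, while longer, is fully constructive in that it actually exhibits the convex combination of the $y_j$ summing to $0$. Both are valid; yours is the more economical proof of the stated proposition.
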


\begin{proof}[Proof of Proposition \ref{ReformulateNecessaryCondition}]
	By \eqref{h:3}, $\supp(\mu)\not\subseteq \interior(\mathcal{W}_{0, \zeta})$ for any $\zeta\in S^{d-1}$. Since $\supp(\mu)$ is compact, Lemma \ref{ImportantCor} implies that $0\in\ch(\supp(\mu))$.  We cannot have $0\in\partial(\ch(\supp(\mu)))$ because then convexity of $\ch(\supp(\mu))$ would imply  existence of a supporting hyperplane $H$ of $\ch(\supp(\mu))$ such that $0\in H$ (and then $H=\partial \mathcal{W}_{0, \zeta}$  for some $\zeta\in S^{d-1}$). This implies that $\supp(\mu)\subseteq \ch(\supp(\mu))\subseteq \mathcal{W}_{0, \zeta}$, yielding a contradiction.  Therefore $0\in\interior(\ch(\supp(\mu)))$, and Lemma \ref{VeryUsefulTheorem} shows that there exist $k\le 2d$ points $\{x_1,\dots,x_k\}\subseteq\ \ch(\supp(\mu))$
	such that 
	\[0\in\interior(\ch(\{x_1,\dots,x_k\})).\]

By Proposition \ref{ConvexHullWiggle}, there is $\ep>0$ such that $0\in \ch(\left\{y_{1}, y_{2}, \ldots, y_{k}\right\})$ whenever $y_{i}\in B_{\ep}(x_{i})$ for each $i=1,\dots,k$.  Since any $A\in \esssup(\mu)$ satisfies $A\cap B_{\epsilon}(x_i)\ne\emptyset$ for each $i=1,\dots,k$ (because $x_i\in \supp(\mu)$ and so $\mu(B_{\epsilon}(x_i))>0$), it follows that $0\in\ch(A)$.  Therefore,   $0\in\ch(\mu)$.
\end{proof}

\subsection{Proof that \texorpdfstring{$u_{\mu}$}{} is a transition solution with bounded width}

Let us start with some preliminary lemmas.  Note that we obviously have $\mu(\mathcal{W}^c_{0, \zeta})>0$ for any $\zeta\in S^{d-1}$.

\begin{lem}\label{InfMeasureHalfSpaces}
If $\mu$ satisfies \eqref{h:3}, then 
\begin{equation*}
a^*:=\inf\limits_{\zeta\in S^{d-1}}\mu(\mathcal{W}^c_{0, \zeta})>0. 
\end{equation*}
\end{lem}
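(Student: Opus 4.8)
The plan is to argue by contradiction via a compactness argument on the sphere. Suppose $a^*=0$. Then there is a sequence $\zeta_n\in S^{d-1}$ with $\mu(\mathcal{W}^c_{0,\zeta_n})\to 0$. Since $S^{d-1}$ is compact, after passing to a subsequence we may assume $\zeta_n\to\zeta_*\in S^{d-1}$. The key claim is that then $\mu(\mathcal{W}^c_{0,\zeta_*})=0$, i.e., $\mu(\{x: x\cdot\zeta_*<0\})=0$, which forces $\supp(\mu)\subseteq\mathcal{W}_{0,\zeta_*}$, contradicting \eqref{h:3}.

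To prove the claim, I would exploit a suitable continuity/monotonicity of $\zeta\mapsto\mathcal{W}^c_{0,\zeta}$ under the measure $\mu$. The cleanest route: for each fixed $\beta>0$, the open cone $C_\beta(\zeta):=\{x:x\cdot\zeta<-\beta|x|\}$ is contained in $\mathcal{W}^c_{0,\zeta}$, so $\mu(C_\beta(\zeta_n))\le\mu(\mathcal{W}^c_{0,\zeta_n})\to 0$. Moreover, for $n$ large, $|\zeta_n-\zeta_*|<\beta$ implies (by Cauchy--Schwarz) that $\{x:x\cdot\zeta_*<-2\beta|x|\}\subseteq C_\beta(\zeta_n)$, since $x\cdot\zeta_n=x\cdot\zeta_*+x\cdot(\zeta_n-\zeta_*)\le x\cdot\zeta_*+\beta|x|<-\beta|x|$. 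Hence $\mu(\{x:x\cdot\zeta_*<-2\beta|x|\})=0$ for every $\beta>0$. Letting $\beta\downarrow 0$ and using that $\{x:x\cdot\zeta_*<0\}=\bigcup_{\beta>0}\{x:x\cdot\zeta_*<-2\beta|x|\}$ (an increasing union, so by continuity from below of $\mu$), we conclude $\mu(\{x:x\cdot\zeta_*<0\})=0$. Therefore $\supp(\mu)\subseteq\{x:x\cdot\zeta_*\ge 0\}=\mathcal{W}_{0,\zeta_*}$, the desired contradiction with \eqref{h:3}.

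The main obstacle — which is really the only subtle point — is handling the lack of upper semicontinuity of $\zeta\mapsto\mu(\mathcal{W}^c_{0,\zeta})$ at the closed half-space level: a naive ``$\mathcal{W}^c_{0,\zeta_n}$ converges to $\mathcal{W}^c_{0,\zeta_*}$'' argument fails because mass sitting exactly on the hyperplane $\partial\mathcal{W}_{0,\zeta_*}$ could be ``seen'' by $\mathcal{W}^c_{0,\zeta_n}$ for some approximating directions but not others. The fix above sidesteps this by working with strictly interior open subcones $C_\beta(\zeta)$, whose containment relations are robust under small perturbations of $\zeta$, and only passing to the half-space at the very end through a monotone limit. (One could alternatively invoke that $\mu$ is finite and the sets $\mathcal{W}^c_{0,\zeta_n}\triangle\mathcal{W}^c_{0,\zeta_*}$ shrink to a $\mu$-null set, but the subcone approach is more transparent.) No deeper input is needed: finiteness of $\mu$, compactness of $S^{d-1}$, and continuity from below of $\mu$ suffice.
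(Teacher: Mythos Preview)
Your proof is correct and follows essentially the same compactness-and-contradiction strategy as the paper. The only difference is in the device used to pass from $\mu(\mathcal{W}^c_{0,\zeta_n})\to 0$ to $\mu(\mathcal{W}^c_{0,\zeta_*})=0$: the paper chooses the $\zeta_n$ with $\mu(\mathcal{W}^c_{0,\zeta_n})<2^{-n}$ and then uses the one-line containment $\mathcal{W}^c_{0,\zeta_*}\subseteq\bigcap_j\bigcup_{n\ge j}\mathcal{W}^c_{0,\zeta_n}$ (a Borel--Cantelli step), while you route through the interior subcones $C_\beta$ and monotone convergence.
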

\begin{proof}
If $a^*=0$, then there is a sequence $\{\zeta_n\}\subseteq S^{d-1}$ with  $\mu(\mathcal{W}^c_{0, \zeta_n})<2^{-n}$ for each $n$.  By compactness of $S^{d-1}$, after passing to a subsequence we can assume that $\zeta_{n}\rightarrow \zeta\in S^{d-1}$. But 
\begin{equation*}
\mathcal{W}^c_{0, \zeta}\subseteq \bigcap\limits_{j=1}^{\infty}\bigcup\limits_{n=j}^{\infty}\mathcal{W}^c_{0, \zeta_{n}}
\end{equation*}
then yields $\mu(\mathcal{W}^c_{0, \zeta})=0$, a contradiction with \eqref{h:3}.
\end{proof}

For $N\ge 1$, let
\begin{align*}
Z_{N}:=\{\zeta\in S^{d-1}:\mu(C_{N, \zeta})>0\},
\end{align*}
where for $\zeta\in S^{d-1}$ we let  
\begin{align*}
C_{N, \zeta}:= \interior(\mathcal{W}_{N^{-1},-\zeta}\cap A(N^{-1},1)).
\end{align*}

\begin{lem}\label{UniformCutoutPosMeasure}
If $\mu$ satisfies \eqref{h:3}, then $S^{d-1}=Z_{N}$ for some $N\ge 1$.
\end{lem}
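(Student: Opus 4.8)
The plan is to argue by contradiction, exactly as in the proof of Lemma~\ref{InfMeasureHalfSpaces}, but now dealing with the added complication that the sets $C_{N,\zeta}$ shrink as $N\to\infty$. Suppose $S^{d-1}\ne Z_N$ for every $N\ge 1$; then for each $N$ there is $\zeta_N\in S^{d-1}$ with $\mu(C_{N,\zeta_N})=0$. By compactness of $S^{d-1}$, after passing to a subsequence we may assume $\zeta_N\to\zeta\in S^{d-1}$. First I would observe that $C_{N,\zeta_N}=\interior(\mathcal W_{N^{-1},-\zeta_N}\cap A(N^{-1},1))$, and I want to show that the open set $\mathcal W^c_{0,\zeta}\setminus\{0\}$ (equivalently, the open ``negative'' half-space $\{x:x\cdot\zeta<0\}$ intersected with $\RR^d\setminus\{0\}$) is contained, up to a $\mu$-null set, in $\bigcup_{N}C_{N,\zeta_N}$ — or more precisely, that every point $x$ with $x\cdot\zeta<0$ and $x\ne 0$ lies in $C_{N,\zeta_N}$ for all sufficiently large $N$.

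To make that precise: fix such an $x$, so $x\cdot\zeta=-c$ with $c>0$ and $|x|=\rho>0$. Since $\zeta_N\to\zeta$, for large $N$ we have $x\cdot(-\zeta_N)=-x\cdot\zeta_N\ge c/2>0$, hence $x\cdot(-\zeta_N)\ge \tfrac{c}{2\rho}|x|$, so once $N^{-1}\le \tfrac{c}{2\rho}$ we get $x\in\mathcal W_{N^{-1},-\zeta_N}$ — in fact in its interior, since the inequality is strict. Moreover once $N^{-1}<\rho<N$ (i.e.\ $N>\max\{\rho,\rho^{-1}\}$) we have $x\in \interior(A(N^{-1},1))$. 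Therefore $x\in C_{N,\zeta_N}$ for all $N$ large enough (depending on $x$), which gives
\[
\{x\in\RR^d:\ x\cdot\zeta<0\}\setminus\{0\}\ \subseteq\ \bigcup_{j=1}^\infty\bigcap_{N=j}^\infty C_{N,\zeta_N}\ \subseteq\ \liminf_{N\to\infty} C_{N,\zeta_N}.
\]
Since $\mu(C_{N,\zeta_N})=0$ for each $N$, countable subadditivity gives $\mu\big(\bigcap_{N\ge j}C_{N,\zeta_N}\big)=0$ for each $j$, and then $\mu$ of the countable union over $j$ is $0$; hence $\mu(\{x\cdot\zeta<0\})=0$. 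Because $\mu(\{0\})$ contributes nothing to $\mathcal W^c_{0,\zeta}$ beyond the boundary hyperplane issue — note $\mathcal W^c_{0,\zeta}=\{x:x\cdot\zeta<0\}$ is already open and $0\notin\mathcal W^c_{0,\zeta}$ — we conclude $\mu(\mathcal W^c_{0,\zeta})=0$, i.e.\ $\supp(\mu)\subseteq\mathcal W_{0,\zeta}$, contradicting \eqref{h:3}. This contradiction shows $S^{d-1}=Z_N$ for some $N\ge 1$.

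The main obstacle I anticipate is handling the geometry of the shrinking cones carefully: one must check that a fixed point $x$ in the open negative half-space of the limit direction $\zeta$ genuinely survives inside $C_{N,\zeta_N}$ for all large $N$, which requires the two quantitative thresholds above ($N^{-1}\le \tfrac{c}{2\rho}$ from the cone-angle condition together with the strict inequality, and $N>\max\{\rho,\rho^{-1}\}$ from the annulus condition) to be compatible — they are, since each only excludes finitely many $N$. A secondary point to be careful about is that taking interiors in the definition of $C_{N,\zeta_N}$ does not cause trouble: the inequalities $x\cdot(-\zeta_N)>N^{-1}|x|$ and $N^{-1}<|x|<1$ are all strict for our $x$ at large $N$, so $x$ lands in the interior, not merely the closure. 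Everything else is the same $\liminf$/subadditivity bookkeeping as in Lemma~\ref{InfMeasureHalfSpaces}.
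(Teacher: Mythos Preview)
Your argument is essentially correct, but there is one slip: the annulus is $A(N^{-1},1)=B_1\setminus B_{N^{-1}}$, so its outer radius is $1$, not $N$. The interior condition is $N^{-1}<|x|<1$, which is \emph{not} a condition on $N$ for the upper bound --- it requires $|x|<1$. Your claimed inclusion
\[
\{x:x\cdot\zeta<0\}\setminus\{0\}\ \subseteq\ \bigcup_{j}\bigcap_{N\ge j} C_{N,\zeta_N}
\]
therefore fails for $|x|\ge 1$. This does not damage the proof, since $\supp(\mu)\subseteq B_1$ forces $\mu(\{|x|\ge 1\})=0$; you should simply restrict at the outset to $x\in B_1$ with $x\cdot\zeta<0$, and the rest goes through unchanged.

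Your route differs from the paper's. The paper first observes that each $Z_N$ is open (because $C_{N,\zeta}\subseteq\bigcup_n C_{N,\zeta_n}$ whenever $\zeta_n\to\zeta$, so $Z_N^c$ is sequentially closed) and that $Z_N\subseteq Z_{N+1}$. Thus $\{Z_N^c\}$ is a nested sequence of compact sets; if none is empty, their intersection contains some $\zeta$, and for this \emph{fixed} $\zeta$ one has $\mu(C_{N,\zeta})=0$ for all $N$, hence $\mu(\mathcal W_{0,\zeta}^c)=\mu\big(\bigcup_N C_{N,\zeta}\big)=0$, contradicting \eqref{h:3}. The paper's argument is shorter because the monotonicity $Z_N\subseteq Z_{N+1}$ lets it avoid varying directions $\zeta_N$ altogether: once a single $\zeta$ lies in every $Z_N^c$, the exhaustion $\mathcal W_{0,\zeta}^c=\bigcup_N C_{N,\zeta}$ finishes things without any quantitative geometry. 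Your approach, by contrast, tracks a moving sequence $\zeta_N$ and verifies the eventual inclusion pointwise; this is more hands-on but perfectly valid, and has the virtue of not needing the openness of $Z_N$ as a separate step.
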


\begin{proof}
Note that $Z_{N}$ is open in $S^{d-1}$ for each $N\ge 1$ because we have $C_{N, \zeta}\subseteq \bigcup_{n=1}^\infty C_{N, \zeta_n}$ whenever $\zeta_n\to\zeta$.  
Since obviously $Z_{N}\subseteq Z_{N+1}$ for each $N$,
it follows that $\{Z_{N}^c\}_{N=1}^\infty$ is a decreasing sequence of compact sets.  If none of these is empty, then there exists $\zeta\in S^{d-1}\setminus \bigcup_{N=1}^\infty Z_N$, which contradicts $\mathcal{W}^c_{0, \zeta}=\bigcup_{N=1}^\infty C_{N,\zeta}$ and $\mu(\mathcal{W}^c_{0, \zeta})>0$.
\end{proof}

From this, similarly to Lemma \ref{InfMeasureHalfSpaces}, we obtain the following.

\begin{lem}\label{InfMeasureCutouts}
If $\mu$ satisfies \eqref{h:3} and $N$ is from Lemma \ref{UniformCutoutPosMeasure}, then 
\[
b^*:=\inf_{\zeta\in S^{d-1}}\mu(C_{N,\zeta})>0.
\] 
\end{lem}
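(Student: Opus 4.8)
The plan is to mirror the proof of Lemma~\ref{InfMeasureHalfSpaces}, using the compactness of $S^{d-1}$ together with a ``limsup of cutouts contains the limiting cutout'' inclusion. Suppose for contradiction that $b^*=0$. Then there is a sequence $\{\zeta_n\}\subseteq S^{d-1}$ with $\mu(C_{N,\zeta_n})<2^{-n}$ for each $n$, and by compactness we may pass to a subsequence with $\zeta_n\to\zeta\in S^{d-1}$.

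The key geometric step is to show that for a suitable (slightly smaller) parameter, the cutout $C_{M,\zeta}$ associated with $\zeta$ is contained in $\bigcap_{j=1}^\infty\bigcup_{n=j}^\infty C_{N,\zeta_n}$. The point is that if $x\in\interior(\mathcal{W}_{M^{-1},-\zeta}\cap A(M^{-1},1))$ for some $M>N$, then for all $n$ large enough (depending on $x$ through how far it sits in the interior), $x\cdot(-\zeta_n)>N^{-1}|x|$ and $N^{-1}<|x|<1$ still hold, because $\zeta_n\to\zeta$ and these are open conditions with uniform slack coming from $M>N$. Hence $x\in C_{N,\zeta_n}$ for infinitely many $n$. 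By the Borel--Cantelli-type bound $\sum_n\mu(C_{N,\zeta_n})<\infty$, we get $\mu\big(\bigcap_j\bigcup_{n\ge j}C_{N,\zeta_n}\big)=0$, and therefore $\mu(C_{M,\zeta})=0$ for every $M>N$.

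Finally, since $\bigcup_{M> N} C_{M,\zeta}=\interior(\mathcal{W}_{0,-\zeta})\setminus\{0\}=\mathcal{W}^c_{0,\zeta}$ (the union of the open cones $\mathcal{W}_{M^{-1},-\zeta}$ over $M$ exhausts the open half-space $\interior(\mathcal{W}_{0,-\zeta})$, and the annuli $A(M^{-1},1)$ exhaust $\RR^d\setminus\{0\}$, while $\{0\}$ contributes nothing to $\mathcal{W}^c_{0,\zeta}$ which is open), monotone convergence gives $\mu(\mathcal{W}^c_{0,\zeta})=0$, contradicting \eqref{h:3}.

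I expect the only delicate point to be the inclusion $C_{M,\zeta}\subseteq\bigcap_{j}\bigcup_{n\ge j}C_{N,\zeta_n}$ for $M>N$, i.e.\ verifying that a point strictly inside the $M$-cutout of $\zeta$ lands in the $N$-cutout of $\zeta_n$ for all large $n$; this is a short estimate using $|x\cdot\zeta_n - x\cdot\zeta|\le|x|\,|\zeta_n-\zeta|$ and the gap between $M^{-1}$ and $N^{-1}$. Everything else is a routine repetition of the compactness/measure-theoretic argument from Lemma~\ref{InfMeasureHalfSpaces}. Alternatively, one can avoid choosing $M$ altogether by noting directly that $\bigcap_{N'}\bigcup_{n\ge N'} \interior(\mathcal W_{(2N)^{-1},-\zeta_n}\cap A((2N)^{-1},1))$ contains $C_{2N,\zeta}$ and using the analogue of Lemma~\ref{UniformCutoutPosMeasure} with $2N$ in place of $N$ — but since Lemma~\ref{UniformCutoutPosMeasure} is stated for a single $N$, the cleanest route is the one above with a fresh $M>N$.
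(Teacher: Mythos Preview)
Your argument is correct and follows the same compactness/Borel--Cantelli template as the paper, but you take an unnecessary detour through $M>N$. The paper's one-line proof uses the inclusion
\[
C_{N,\zeta}\subseteq \bigcap_{j=1}^\infty\bigcup_{n=j}^\infty C_{N,\zeta_n}
\]
(with the \emph{same} $N$), which was already established in the proof of Lemma~\ref{UniformCutoutPosMeasure}: since $C_{N,\zeta}$ is defined via the \emph{interior}, any $x\in C_{N,\zeta}$ satisfies the strict inequality $-x\cdot\zeta>N^{-1}|x|$, and this persists for $-x\cdot\zeta_n$ once $n$ is large. No gap between $M^{-1}$ and $N^{-1}$ is needed. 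One then concludes $\mu(C_{N,\zeta})=0$, contradicting $\zeta\in Z_N=S^{d-1}$ from Lemma~\ref{UniformCutoutPosMeasure} directly, without exhausting half-spaces or invoking \eqref{h:3} again.

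One small slip: the annuli $A(M^{-1},1)$ exhaust $B_1\setminus\{0\}$, not $\RR^d\setminus\{0\}$, so $\bigcup_{M>N}C_{M,\zeta}=\mathcal{W}^c_{0,\zeta}\cap B_1$ rather than all of $\mathcal{W}^c_{0,\zeta}$. This is harmless here because $\supp(\mu)\subseteq B_1$, but it should be stated correctly.
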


From now on, we fix $N$ from Lemma \ref{UniformCutoutPosMeasure} and $b^*$ from Lemma \ref{InfMeasureCutouts} (both depending on $\mu$).  We will now prove \eqref{1.1} for $u_\mu$,
first considering all large negative $t$.  

\begin{lem}\label{LevelSetEstimate}
If $\mu$ satisfies \eqref{h:3} and $\ep\in(0,\frac{1}{2})$, then there are $K,T>0$ such that $u_{\mu}(t,x)\ge 1-\ep$ whenever $t\le -T$ and $\vert x\vert\ge K\vert t\vert$.
\end{lem}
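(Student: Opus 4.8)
The plan is to reduce the statement to a lower bound on $v_\mu$ via \eqref{ComparisonOfVandU}. Since $h$ is increasing with $\lim_{v\to\infty}h(v)=1$, it suffices to produce $K,T>0$ such that
\[
v_\mu(t,x)\ \ge\ h^{-1}(1-\ep)\qquad\text{whenever } t\le -T \text{ and } |x|\ge K|t|,
\]
because then $u_\mu(t,x)\ge h(v_\mu(t,x))\ge 1-\ep$. Accordingly I would write $t=-s$ with $s=|t|\ge T$, and $x=|x|\zeta$ with $\zeta\in S^{d-1}$ and $|x|\ge Ks$, and fix once and for all $N$ from Lemma \ref{UniformCutoutPosMeasure} and $b^*>0$ from Lemma \ref{InfMeasureCutouts}; $K$ and $T$ will then be chosen at the end, depending only on $\ep,N,b^*$.

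The key step is to discard most of the measure and integrate only over the cone-annulus $C_{N,\zeta}$ attached to the direction $\zeta$ of $x$:
\[
v_\mu(-s,x)\ \ge\ \int_{C_{N,\zeta}} e^{-\xi\cdot x-(|\xi|^2+1)s}\,d\mu(\xi).
\]
On $C_{N,\zeta}$ the exponent is both large and controlled \emph{uniformly in $\zeta$}: by definition of $C_{N,\zeta}$ every $\xi\in C_{N,\zeta}$ satisfies $\xi\cdot\zeta\le -N^{-1}|\xi|$ and $N^{-1}\le|\xi|\le 1$, so $-\xi\cdot x=-|x|\,(\xi\cdot\zeta)\ge N^{-1}|\xi|\,|x|\ge N^{-2}Ks$, while $(|\xi|^2+1)s\le 2s$ since $|\xi|\le 1$. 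Hence
\[
v_\mu(-s,x)\ \ge\ e^{(N^{-2}K-2)s}\,\mu(C_{N,\zeta})\ \ge\ b^*\,e^{(N^{-2}K-2)s},
\]
the last inequality being Lemma \ref{InfMeasureCutouts}. Choosing $K$ with $N^{-2}K-2\ge 1$ (e.g.\ $K:=3N^2$) and then $T>0$ large enough that $b^*e^{T}\ge h^{-1}(1-\ep)$, we obtain $v_\mu(-s,x)\ge h^{-1}(1-\ep)$ for all $s\ge T$ and all $x$ with $|x|\ge Ks$, which finishes the proof.

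I do not expect a genuine obstacle in the lemma itself — it is a one-line exponential estimate. The substantive content has already been isolated in Lemmas \ref{UniformCutoutPosMeasure} and \ref{InfMeasureCutouts}, which guarantee that for \emph{every} direction $\zeta$ there is a single cone-annulus $C_{N,\zeta}$ carrying at least a fixed amount $b^*>0$ of the mass of $\mu$; this uniformity — exactly where hypothesis \eqref{h:3} is used — is what lets $K$ and $T$ be chosen independently of $x$. The only points needing care are the elementary inequalities $\xi\cdot\zeta\le -N^{-1}|\xi|$ and $|\xi|\ge N^{-1}$ coming from the definition of $C_{N,\zeta}$, and the bound $|\xi|^2+1\le 2$ on $B_1$.
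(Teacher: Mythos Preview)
Your proof is correct and essentially identical to the paper's: both restrict the integral defining $v_\mu$ to $C_{N,\zeta}$ with $\zeta=x/|x|$, use the defining inequalities $\xi\cdot\zeta\le -N^{-1}|\xi|$ and $|\xi|\ge N^{-1}$ together with $|\xi|^2+1\le 2$ to bound the exponent below by $(KN^{-2}-2)|t|$, invoke Lemma~\ref{InfMeasureCutouts} for the mass, and take $K=3N^2$ and $T$ with $b^*e^T\ge h^{-1}(1-\ep)$. The paper's choice $T=\ln\frac{h^{-1}(1-\ep)}{b^*}$ is exactly your ``$T$ large enough that $b^*e^T\ge h^{-1}(1-\ep)$.''
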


\begin{proof}
Let $K:=3N^{2}$ and $T:=\ln \frac{h^{-1}(1-\ep)}{b^*}.$
Since for any $x\in\mathbb R^d\setminus\{0\}$ we obviously have
	\begin{align*}
	\inf_{\xi\in C_{N,x\vert x\vert^{-1}}}\left(-\xi\cdot \frac{x}{\vert x\vert}\right)\ge \frac{1}{N^{2}},
	\end{align*}
 for any $t\le -T$ and $x\in\mathbb{R}^d$ with $\vert x\vert \ge K\vert t\vert$ we obtain
	\begin{align*}
	v_{\mu}(t,x)
	&=\int_{B_1}e^{-\xi\cdot x+(|\xi|^{2}+1)t}\,d\mu(\xi)\\
	&\ge \int_{C_{N,x\vert x\vert^{-1}}} e^{ \vert x\vert N^{-2}+2t}\,d\mu(\xi)\\
	&\ge e^{(KN^{-2} - 2)\vert t\vert} \mu(C_{N,x\vert x\vert^{-1}}) \\
	&\ge e^{T} b^*\\
	&= h^{-1}(1-\ep).
	\end{align*}
	Theorem \ref{ZlatosThm:tf}(i) now finishes the proof.
\end{proof}

\begin{lem}\label{IntegralSmallNear0}
If $\mu$ satisfies \eqref{h:3} and $\ep\in(0,\frac{1}{2})$, then the following holds for any $a>0$.  There are $T_a,\delta_a>0$ such that if $(t,x)\in (-\infty, -T_{a}]\times \RR^{d}$ 
and $u_{\mu}(t,x)<1-\ep$, then 
	\begin{align*}
	\int_{B_{\delta_{a}}} e^{-\xi\cdot x+(|\xi|^2+1)t}\,d\mu(\xi)\le a.
	\end{align*}
\end{lem}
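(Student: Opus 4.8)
The plan is to pit the exponential decay in $|t|$ of the low-frequency part of $v_\mu$ against its slow spatial growth, using Lemma~\ref{LevelSetEstimate} to limit how large $|x|$ can be. Concretely, I would first invoke Lemma~\ref{LevelSetEstimate} with the same $\ep$ to obtain constants $K=3N^2$ and $T>0$ such that $u_\mu(t,x)\ge 1-\ep$ whenever $t\le -T$ and $|x|\ge K|t|$. In contrapositive form this says: if $t\le -T$ and $u_\mu(t,x)<1-\ep$, then $|x|<K|t|$. This is the only place where the hypothesis $u_\mu(t,x)<1-\ep$ will enter the argument.

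\textbf{Step 2: estimating the low-frequency integrand.} For $\xi\in B_{\delta_a}$ and $t<0$, Cauchy--Schwarz gives $-\xi\cdot x\le|\xi|\,|x|\le\delta_a|x|$, while $|\xi|^2+1\ge1$ gives $(|\xi|^2+1)t\le t=-|t|$. Combining these with $|x|<K|t|$ from Step~1, one gets, whenever $t\le -T$, $u_\mu(t,x)<1-\ep$ and $\xi\in B_{\delta_a}$,
\[
e^{-\xi\cdot x+(|\xi|^2+1)t}\le e^{(\delta_aK-1)|t|}.
\]
I would then fix $\delta_a:=\tfrac1{2K}$, which depends only on $\mu$ (through $N$), so that the exponent becomes $-\tfrac12|t|$; integrating over $B_{\delta_a}$ and using $\supp(\mu)\subseteq B_1$ (hence $\mu(B_{\delta_a})\le\mu(\RR^d)$) then yields
\[
\int_{B_{\delta_a}}e^{-\xi\cdot x+(|\xi|^2+1)t}\,d\mu(\xi)\le\mu(\RR^d)\,e^{-|t|/2}.
\]

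\textbf{Step 3: choosing $T_a$.} Finally I would pick $T_a:=\max\{T,\,2\ln(\mu(\RR^d)/a)\}$, which is $\ge T>0$ and satisfies $\mu(\RR^d)e^{-T_a/2}\le a$. Then for any $(t,x)$ with $t\le -T_a$ and $u_\mu(t,x)<1-\ep$, Step~1 applies and the integral above is bounded by $\mu(\RR^d)e^{-T_a/2}\le a$, which is the claim.

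I do not anticipate a real obstacle here. The one subtlety worth noting is that the trivial estimate $\int_{B_{\delta_a}}(\cdots)\,d\mu\le v_\mu(t,x)\le h^{-1}(1-\ep)$ coming from \eqref{ComparisonOfVandU} is only $O(1)$, whereas we need it to be $O(a)$; what actually does the job is that the frequencies in $B_{\delta_a}$ ``move'' with speed at most $\delta_a$, so over the region $|x|<K|t|$ allowed by Lemma~\ref{LevelSetEstimate} their growth $e^{\delta_a|x|}\le e^{\delta_aK|t|}$ is beaten by the universal decay factor $e^{t}=e^{-|t|}$ precisely when $\delta_aK<1$ --- which is exactly what forces the choice of $\delta_a$ small relative to $1/K$.
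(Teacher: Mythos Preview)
Your proof is correct and follows the same approach as the paper: localize via Lemma~\ref{LevelSetEstimate} to get $|x|<K|t|$, bound the integrand on $B_{\delta_a}$ by $e^{(\delta_a K-1)|t|}$, and choose parameters so that this times $\mu(B_1)$ is at most $a$. The only cosmetic difference is that the paper lets both $\delta_a$ and $T_a$ depend on $a$ (taking $\delta_a=\tfrac1K(1-|\ln(a/\mu(B_1))|/T_a)$), whereas you fix $\delta_a=\tfrac1{2K}$ once and absorb all the $a$-dependence into $T_a$; your choice is a slight simplification and works equally well for the downstream application in Proposition~\ref{NegTime}.
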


\begin{proof}
We can assume without loss that $a\le\mu(B_1)$.  Let $K,T$ be from Lemma \ref{LevelSetEstimate} and define
\begin{align*}
T_a &:= \max\left\{ T, 1+\left| \ln \frac a{\mu(B_1)} \right| \right\},
\\ \delta_a &:= \frac 1K \left(1-\frac{\left|\ln\frac{a}{\mu(B_1)}\right|}{T_a} \right)>0.
\end{align*}
Since $T_a\ge T$, Lemma \ref{LevelSetEstimate} and Theorem \ref{ZlatosThm:tf}(i) show that for any $(t,x)$ as above, we must have $\vert x\vert < K\vert t\vert$.

We also have $\delta_a K-1<0$, hence for any $t\le -T_a$ we find 
		\begin{align*}
		(\delta_a K-1)\vert t\vert \le (\delta_a K-1)T_a\le \ln\frac{a}{\mu(B_1)}.
		\end{align*}
		It follows that for $(t,x)$ as above we obtain 
\[
\int_{B_{\delta_a}} e^{-\xi\cdot x+(|\xi|^2+1)t}\,d\mu(\xi)
		\le \int_{B_{\delta_a}} e^{\delta_a K\vert t\vert +t}\,d\mu(\xi)
		\le e^{(\delta_a K-1)\vert t\vert} \mu(B_1)
		\le a,
\]
		and the proof is finished.
	\end{proof}

We can now prove \eqref{1.1} for $u=u_\mu$.

\begin{prop}\label{NegTime}
	If $\mu$ satisfies \eqref{h:3} and $\ep\in(0,\frac{1}{2})$, then there is $L_{\ep}<\infty$ such that  for each $t\in\mathbb R$,
	\begin{align*}
	\Omega_{u_{\mu},\ep}(t)\subseteq B_{L_{\ep}}(\Omega_{u_{\mu},1-\ep}(t)).
	\end{align*}
\end{prop}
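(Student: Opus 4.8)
The plan is to split $\RR$ into two time regimes and treat them separately, then take $L_\ep$ to be the larger of the two constants produced. For large negative times ($t\le -T_\ep$ for a suitable $T_\ep$), the idea is that the geometry of $\supp(\mu)$ spanning all of $S^{d-1}$ (via \eqref{h:3}, i.e.\ through Lemmas \ref{UniformCutoutPosMeasure} and \ref{InfMeasureCutouts}) forces $u_\mu$ to be close to $1$ on a complement of a bounded-in-$t$ ball rescaled by $|t|$; precisely, Lemma \ref{LevelSetEstimate} gives $u_\mu(t,x)\ge 1-\ep$ whenever $|x|\ge K|t|$ and $t\le -T$. So for $t\le -T$, every point $x\in\Omega_{u_\mu,\ep}(t)$ that lies outside $B_{K|t|}$ is already in $\Omega_{u_\mu,1-\ep}(t)$, and it remains to handle $x\in\Omega_{u_\mu,\ep}(t)\cap B_{K|t|}$. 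For such $x$, I would use Lemma \ref{IntegralSmallNear0}: since $u_\mu(t,x)<1-\ep$ fails only if $x\in\Omega_{u_\mu,1-\ep}(t)$ already, assume $u_\mu(t,x)<1-\ep$, so the low-frequency part $\int_{B_{\delta_a}}e^{-\xi\cdot x+(|\xi|^2+1)t}\,d\mu\le a$ is small (choosing $a$ appropriately, e.g.\ $a=\tfrac12 h^{-1}(\ep)$ or similar). Then $v_\mu(t,x)=\int_{B_{\delta_a}}+\int_{A(\delta_a,1)}$, and the high-frequency part $\int_{A(\delta_a,1)}e^{-\xi\cdot x+(|\xi|^2+1)t}\,d\mu$ must be bounded below, because $v_\mu(t,x)\ge u_\mu(t,x)\ge\ep$ (wait --- more carefully, $h(v_\mu)\le u_\mu$ so $v_\mu\ge h^{-1}(\ep)$) and the low-frequency part is at most $a<h^{-1}(\ep)$.

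The key mechanism for the bounded-width conclusion at negative times is then a translation argument analogous to the one in Section \ref{s:(i)}: on the high-frequency piece $A(\delta_a,1)$, the mass sits at distance $\ge\delta_a$ from $0$, so translating $x$ in a well-chosen direction (roughly, the direction that best aligns with the bulk of $\supp(\mu)\cap A(\delta_a,1)$, which by Lemmas \ref{UniformCutoutPosMeasure}--\ref{InfMeasureCutouts} always contains a fixed positive mass $b^*$ in the cone $C_{N,\zeta}$ for the appropriate $\zeta$) by a bounded amount $s$ multiplies that piece by at least $e^{s\delta_a/N}$ or so, while the gradient bounds $|\nabla v_i|\le v_i$ control how much the low-frequency piece and the rest can shrink. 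Choosing $s$ large enough (depending only on $\ep$, $\delta_a$, $N$, $b^*$, $h$, but not on $t$ or $x$) pushes $v_\mu$ above $h^{-1}(1-\ep)$ at the translated point $x'$, so $x'\in\Omega_{u_\mu,1-\ep}(t)$ with $|x-x'|\le s$. This handles all $t\le -T_\ep$ with $L_\ep^{(1)}:=\max\{K T_\ep, s\}$ --- wait, that is not bounded; rather, for the outside-ball points no translation is needed (they are already in the target set), and for inside-ball points the translation has bounded length $s$, so $L_\ep^{(1)}:=s$ suffices on $t\le -T_\ep$.

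For the remaining compact time interval $t\in[-T_\ep,\infty)$ --- actually $[-T_\ep,+\infty)$ is not compact, so this is the real issue: positive and moderate times. Here I expect the main obstacle. For bounded $|t|$ one can use continuity/compactness-type arguments (the family $\{u_\mu(t,\cdot)\}_{|t|\le T_\ep}$ together with parabolic regularity gives uniform control), but for $t\to+\infty$ one must genuinely use that $u_\mu\to1$ locally uniformly (Theorem \ref{ZlatosThm:tf} via \eqref{9.3}) and, more quantitatively, re-run the spreading estimate \eqref{9.0}: since $u_\mu$ dominates $h(v_\mu)$ and $v_\mu$ grows, one shows $\Omega_{u_\mu,\ep}(t)\subseteq B_{R(t)}$ while $\Omega_{u_\mu,1-\ep}(t)\supseteq B_{R'(t)}$ with $R(t)-R'(t)$ bounded --- this is where the structure of Theorem \ref{ZlatosThm:tf}(iii)'s proof (the case $0\notin\supp\mu$) should be adapted, or one invokes that for positive times the "outer" level set of $v_\mu$ and the region where $v_\mu$ is large are at bounded Hausdorff distance because of the exponential form \eqref{EquationOfV} combined with the uniform cone condition from Lemma \ref{InfMeasureCutouts}. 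Concretely: for $t\ge 0$ and $x\in\Omega_{u_\mu,\ep}(t)$, split $v_\mu$ as above; if the low-frequency part is small, translate as before; if it is not small, then $x$ cannot be too far out, and one uses that $u_\mu\to1$ uniformly on compacts together with the fact (from Lemma \ref{LevelSetEstimate}-type reasoning applied with $t$ replaced by a shifted origin, using $(u_\mu)_t>0$) that the region $\{u_\mu(t,\cdot)\ge1-\ep\}$ is "thick". I would take $L_\ep$ to be the max of the negative-time constant, the positive-time constant, and a uniform bound covering $|t|\le$ some threshold. The delicate point --- and the step I'd budget the most care for --- is making the positive-time bound genuinely uniform in $t$ rather than growing, which requires combining the exponential-kernel geometry of $v_\mu$ with monotonicity $(u_\mu)_t>0$ to reduce to (or bootstrap from) the negative-time estimate already in hand.
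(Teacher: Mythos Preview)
Your negative-time sketch is in the right spirit (split off the low-frequency ball via Lemma~\ref{IntegralSmallNear0}, then translate to boost the remaining high-frequency integral), but the choice of translation direction is not justified as written. Lemmas~\ref{UniformCutoutPosMeasure}--\ref{InfMeasureCutouts} control the $\mu$-mass of $C_{N,\zeta}$, not the \emph{weighted} integral $\int_{C_{N,\zeta}} e^{-\xi\cdot x+(|\xi|^2+1)t}\,d\mu$, so the constant $b^*$ has no business appearing at this point. What you actually need is a pigeonhole step: cover $B_1$ by the half-cones $Y_\zeta:=\mathcal W_{1/2,\zeta}\cap B_1$ and observe (by averaging over $\zeta\in S^{d-1}$) that some $Y_\zeta$ must carry a fixed fraction of $v_\mu(t,x)\ge\ep$. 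On $Y_\zeta\cap A(\delta_a,1)$ one has $\xi\cdot\zeta\ge\delta_a/2$, so translating $x$ by $-L\zeta$ gains $e^{L\delta_a/2}$ on that piece. This is exactly how the paper finds the direction; your invocation of $C_{N,\zeta}$ here does not do the job.

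The more serious gap is your treatment of $t>-T_a$. You describe this as ``the real issue'' and propose compactness, spreading bounds, and bootstrapping from negative time, but none of that is needed and the regime is in fact the easy one. For $t>-T_a$ one has $(|\xi|^2+1)t\ge -2T_a$ uniformly, and then for \emph{every} $\zeta\in S^{d-1}$ and $s\ge L_\ep^+:=N^2\bigl(|\ln(h^{-1}(1-\ep)/b^*)|+2T_a\bigr)$,
\[
v_\mu(t,s\zeta)\ \ge\ \int_{C_{N,\zeta}} e^{-s\,\xi\cdot\zeta-2T_a}\,d\mu(\xi)\ \ge\ e^{sN^{-2}-2T_a}\,b^*\ \ge\ h^{-1}(1-\ep),
\]
using $-\xi\cdot\zeta\ge N^{-2}$ on $C_{N,\zeta}$ and Lemma~\ref{InfMeasureCutouts}. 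Hence $B_{L_\ep^+}^c\subseteq\Omega_{u_\mu,1-\ep}(t)$ for all $t>-T_a$, so every point of $\RR^d$ is within $L_\ep^+$ of $\Omega_{u_\mu,1-\ep}(t)$, uniformly in such $t$. This is precisely where Lemma~\ref{InfMeasureCutouts} enters; you routed it to the wrong regime. Taking $L_\ep:=\max\{L_\ep^-,L_\ep^+\}$ finishes the proof without any asymptotic analysis as $t\to+\infty$.
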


\begin{proof}
For any $\zeta\in S^{d-1}$, let
\begin{align*}
Y_{\zeta}:=\left\{\xi\in B_1: \zeta\cdot\xi \ge \frac {|\xi|}2 \right\}=\mathcal W_{1/2,\zeta}\cap B_1.
\end{align*}
Let also $a:= \frac\ep 2|Y_\zeta||B_1|^{-1}$ (note that $|Y_\zeta|$ is independent of $\zeta$) and let $\delta_a,T_{a}$ be from Lemma \ref{IntegralSmallNear0}.

We will first consider times $t\le -T_{a}$. 
Fix any such $t$ and let $x$ be such that $ u_{\mu}(t,x)\ge \ep$. 
Since $v_\mu(t,x)\ge u_\mu(t,x)\ge \ep$, there must be $\zeta\in S^{d-1}$ such that
	\begin{align*}
	\int_{Y_{\zeta}} e^{-\xi\cdot x + (|\xi|^2+1)t}\,d\mu(\xi)\ge 2a.
	\end{align*} 
Then Lemma \ref{IntegralSmallNear0} shows that
	\begin{align*}
	\int_{Y_{\zeta}\cap A(\delta_a,1)} e^{-\xi\cdot x + (|\xi|^2+1)t}\,d\mu(\xi)&\ge \int_{Y_{\zeta}} e^{-\xi\cdot x + (|\xi|^2+1)t}\,d\mu(\xi)-\int_{B_{\delta_a}} e^{-\xi\cdot x + (|\xi|^2+1)t}\,d\mu(\xi)
\\
&\geq 2a-a=a,
	\end{align*}
hence for $L_{\ep}^{-}:=\frac 2{\delta_a} \ln \frac{h^{-1}(1-\ep)}{a}$ we have
	\begin{align*}
	v_{\mu}(t,x-L_{\ep}^{-}\zeta)
	&=\int_{B_1}e^{-\xi\cdot(x-L_{\ep}^{-}\zeta)+(|\xi|^2+1)t}\, d\mu(\xi)\\
	&\ge \int_{Y_{\zeta}\cap A(\delta_{a},1)} e^{L_{\ep}^{-}(\xi\cdot\zeta)} e^{-\xi\cdot x+(|\xi|^2+1)t}\, d\mu(\xi)\\
&\ge e^{L_{\ep}^{-} \delta_a/2} a = h^{-1}(1-\ep).
	\end{align*}
If we now choose $L_{\ep}\ge L_{\ep}^{-}$, from  Theorem \ref{ZlatosThm:tf}(i) we obtain the claim for all $t\le -T_a$.

Let us now consider $t>-T_a$. For each $\zeta\in S^{d-1}$ 
we obviously have
\[\inf_{\xi\in C_{N,\zeta}} (- \xi\cdot \zeta) \geq \frac{1}{N^{2}}.\]  
Then for each $s\ge L_{\ep}^+:= N^{2}\left(\left|\ln\frac{h^{-1}(1-\ep)}{b^*}\right|+2T_a\right)$ and $t> -T_a$ we have
\[
v_{\mu}(t,s\zeta)
\ge\int_{C_{N,\zeta}}e^{-\xi\cdot s\zeta-2T_a}\,d\mu(\xi)
\ge e^{sN^{-2}-2T_a}\mu(C_{N,\zeta})\ge e^{sN^{-2}-2T_a} b^*\ge h^{-1}(1-\ep).
\]
Theorem \ref{ZlatosThm:tf}(i) then yields $u_\mu(t,s\zeta)\ge 1-\ep$ for all $\zeta\in S^{d-1}$, $s\ge L_{\ep}^+$, and $t>-T_a$. Hence
\[
B_{L_{\ep}^+}^{c} \subseteq \Omega_{u_{\mu},1-\ep}(t)
\]
for all $t>-T_a$, and  the result follows with $L_\ep:=\max\{L_\ep^-,L_\ep^+\}$.
\end{proof}

Since each $u_\mu$ is a transition solution
it follows that $u_\mu$ is indeed a transition solution with bounded width.

\section{Appendix} 

In this appendix, we prove Proposition \ref{ConvexHullWiggle}.
The proof uses two auxiliary lemmas:

\begin{lem}\label{WiggleLemma1}
	If $0 \in \interior(\ch(\{x_1,\dots,x_k\}))$, then there are $c_i>0$ such that \[\sum_{i=1}^k c_ix_i =0.\]
\end{lem}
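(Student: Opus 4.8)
The plan is to exhibit the coefficients $c_i$ explicitly by using the interior hypothesis to ``push'' each point $x_i$ slightly past the origin. Since $0\in\interior(\ch(\{x_1,\dots,x_k\}))$, there is $r>0$ with $B_r(0)\subseteq\ch(\{x_1,\dots,x_k\})$; note also that $M:=\max_{1\le i\le k}|x_i|>0$, since otherwise $\ch(\{x_1,\dots,x_k\})=\{0\}$ has empty interior and there is nothing to prove. Set $t:=r/(2M)>0$. Then for each $i$ we have $|{-tx_i}|=t|x_i|\le tM=r/2<r$, so $-tx_i\in B_r(0)\subseteq\ch(\{x_1,\dots,x_k\})$, and hence we may write
\[
-tx_i=\sum_{j=1}^k\lambda_{ij}x_j,\qquad \lambda_{ij}\ge 0,\quad \sum_{j=1}^k\lambda_{ij}=1.
\]

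Next I would rearrange each of these $k$ identities as $tx_i+\sum_{j=1}^k\lambda_{ij}x_j=0$ and sum over $i=1,\dots,k$. Collecting the coefficient of each $x_j$ yields
\[
\sum_{j=1}^k\Big(t+\sum_{i=1}^k\lambda_{ij}\Big)x_j=0,
\]
so the lemma holds with $c_j:=t+\sum_{i=1}^k\lambda_{ij}$, which satisfies $c_j\ge t>0$ for every $j$.

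There is essentially no obstacle here; the only point requiring (minor) care is that the coefficients must be \emph{strictly} positive, which is exactly why one adds the displacement $tx_i$ rather than merely invoking $0\in\ch(\{x_1,\dots,x_k\})$: the interior hypothesis is precisely what makes $-tx_i\in\ch(\{x_1,\dots,x_k\})$ available. (This hypothesis is genuinely needed — if $0$ lay only on $\partial(\ch(\{x_1,\dots,x_k\}))$, then some $x_i$ could have strictly positive inner product with the outer normal to a supporting hyperplane through $0$, which would preclude any strictly positive linear relation. One could alternatively deduce the statement from Stiemke's transposition theorem applied to the matrix with columns $x_1,\dots,x_k$, combined with the observation that a linear functional that is nonnegative on all $x_i$ and positive on some $x_i$ would force $0$ onto the boundary of the hull; but the direct construction above is self-contained and suffices.)
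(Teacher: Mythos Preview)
Your proof is correct and follows essentially the same approach as the paper's: both use the interior hypothesis to place $-tx_i$ (the paper's $-\delta x_i$) inside the hull, write each as a convex combination of the $x_j$, and then sum over $i$ to produce strictly positive coefficients. Your version is in fact slightly cleaner, since the paper first writes $0=\sum a_ix_i$ with $a_i\ge 0$ before applying the same trick, whereas you observe (correctly) that this step is unnecessary.
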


\begin{proof}
	There obviously are $a_i \geq 0$ with $\sum_{i=1}^k a_i = 1$ such that $\sum_{i=1}^k a_i x_i=0$. Since $0 \in \interior(\ch(\{x_1,\dots,x_k\}))$, there is $\delta>0$ such that we have $-\delta x_{i}\in \ch(\{x_1, \dots, x_k\})$ for each $i$. Thus, each $-\delta x_i$ may be written as a convex combination $-\delta x_i = \sum_{j=1}^k b_{ij} x_j$, with $b_{ij}\geq 0$ and $\sum_{j=1}^k b_{ij} = 1$. Then
	\begin{align*}
	0 & = \sum_{i=1}^k a_i x_i  +\sum_{i=1}^k \delta x_i + \sum_{i=1}^k -\delta x_i\\
	&  = \sum_{i=1}^k (a_i + \delta) x_i + \sum_{i=1}^k \sum_{j=1}^k b_{ij} x_j\\
	& = \sum_{i=1}^k (a_i +\delta + \sum_{j=1}^k b_{ji}) x_i.
	\end{align*}
	Hence we can take $c_i := a_i +\delta + \sum_{j=1}^k b_{ji}>0$.
\end{proof}

\begin{lem}\label{WiggleLemma2}
	If $0\in \interior (\ch(\{x_1, \dots, x_k\}))$, then for any $r>0$ there is $\epsilon>0$ such that any $p\in B_\epsilon$ can be written as
	\[p = \sum_{i=1}^k a_i x_i, \qquad \text{where }|a_i|\le r.\]
\end{lem}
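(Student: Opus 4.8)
The plan is to obtain Lemma~\ref{WiggleLemma2} from a single rescaling of the hypothesis $0\in\interior(\ch(\{x_1,\dots,x_k\}))$. First I would fix some $\delta>0$ with $B_\delta\subseteq\ch(\{x_1,\dots,x_k\})$, which exists precisely because $0$ lies in the interior of this set. Since the convex hull of finitely many points equals the collection of their convex combinations, every $q\in B_\delta$ can be written as $q=\sum_{i=1}^k b_ix_i$ with $b_i\ge 0$ and $\sum_{i=1}^k b_i=1$; in particular $0\le b_i\le 1$ for every $i$.

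Given $r>0$, I would then take $\epsilon:=\delta r$. For any $p\in B_\epsilon$ the rescaled point $\tfrac1r p$ has norm $|p|/r<\epsilon/r=\delta$, so $\tfrac1r p\in B_\delta\subseteq\ch(\{x_1,\dots,x_k\})$, and hence $\tfrac1r p=\sum_{i=1}^k b_ix_i$ for some weights $b_i\in[0,1]$ with $\sum_i b_i=1$. Multiplying through by $r$ gives $p=\sum_{i=1}^k (rb_i)x_i$, so the choice $a_i:=rb_i$ does the job: $p=\sum_{i=1}^k a_ix_i$ with $0\le a_i\le r$, and in particular $|a_i|\le r$ as required. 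The degenerate case $p=0$ is already covered by $0\in B_\delta$ (or one simply takes all $a_i=0$).

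I do not expect any genuine obstacle here; the only point that needs attention is reconciling the requirement $|a_i|\le r$ with the fact that $p$ may be anywhere in a ball, and this is exactly what dividing by $r$ accomplishes — it pushes $p$ into the fixed ball $B_\delta$ on which the convex-combination coefficients are automatically bounded by $1$, after which scaling back up contracts those coefficients to size at most $r$. Note that this argument does not itself invoke Lemma~\ref{WiggleLemma1}; that lemma instead enters, together with Lemma~\ref{WiggleLemma2}, only in the proof of Proposition~\ref{ConvexHullWiggle}, where one writes the origin against the perturbed points with a small error and then uses Lemma~\ref{WiggleLemma2} to absorb that error into a correction of the (strictly positive) coefficients supplied by Lemma~\ref{WiggleLemma1}.
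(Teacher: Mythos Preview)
Your argument is correct and is essentially identical to the paper's own proof: both fix $\delta>0$ with $B_\delta\subseteq\ch(\{x_1,\dots,x_k\})$, set $\epsilon:=r\delta$, and rescale $p\in B_\epsilon$ by $1/r$ to land in $B_\delta$, where the convex-combination coefficients are at most $1$, then multiply back by $r$. Your additional remarks about the role of Lemma~\ref{WiggleLemma1} versus Lemma~\ref{WiggleLemma2} in the proof of Proposition~\ref{ConvexHullWiggle} are also accurate.
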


\begin{proof}
	There is $\delta>0$ such that $B_\delta\subseteq \ch(\{x_1,\dots, x_k\})$. Then each $z\in B_\delta$ can be written as $z = \sum_{i=1}^k b_i x_i$,
	where $b_i\geq 0$ and $\sum_{i=1}^k b_i = 1$. Given $r>0$, let $\epsilon := r\delta$. Then for any $p\in B_\epsilon$ we have $p = rz$ for some $z\in B_\delta$, so $p = \sum_{i=1}^k (rb_i) x_i$
	with some $b_i\in[0,1]$.  The proof is finished.
\end{proof}

\begin{proof}[Proof of Proposition \ref{ConvexHullWiggle}]
Let $c_i>0$ ($i=1,\dots,k$) be as in Lemma \ref{WiggleLemma1}.  Consider the system of linear equations
	\[A\Theta:=
	\begin{bmatrix}
	1+a_{11} & a_{12} & \dots & a_{1k}\\
	a_{21} & 1+a_{22} & \dots & a_{2k}\\
	\vdots & \vdots & \ddots & \vdots\\
	a_{k1} & a_{k2} & \dots & 1+a_{kk}
	\end{bmatrix} \begin{bmatrix}
	\theta_1\\
	\theta_2\\
	\vdots\\
	\theta_k
	\end{bmatrix} = \begin{bmatrix}
	c_1\\
	c_2\\
	\vdots\\
	c_k
	\end{bmatrix}\]
	with some given $a_{ij}\in \mathbb{R}$. The determinant \[\det A =\begin{vmatrix}
	1+a_{11} & a_{12} & \dots & a_{1k}\\
	a_{21} & 1+a_{22} & \dots & a_{2k}\\
	\vdots & \vdots & \ddots & \vdots\\
	a_{k1} & a_{k2} & \dots & 1+a_{kk}
	\end{vmatrix}\] is a continuous  function of the $a_{ij}$ and equals 1 when they all vanish.  Thus,  there is $r_0>0$ such that $\max_{i,j}|a_{ij}|\le r_0$ implies $\det A>0$. Similarly, 
	\[\det M_l = \begin{vmatrix}\
	1+a_{11} & \dots & a_{1(l-1)} & c_1 & a_{1(l+1)} & \dots & a_{1k}\\
	a_{21} & \dots & a_{2(l-1)} & c_2 & a_{2(l+1)}&\dots & a_{2k}\\
	\vdots & \ & \vdots & \vdots  &\vdots & \ & \vdots\\
	a_{k1} & \dots & a_{k(l-1)}& c_k & a_{k(l+1)} & \dots & 1+a_{kk}
	\end{vmatrix}\]
depends continuously on the $a_{ij}$ and equals $c_l>0$ when they all vanish.
Thus,	 there is $r_1>0$ such that $\max_{i,j}|a_{ij}|\le r_1$ implies  $\max_l \det M_l >0$.

	Let $r := \min \{r_0, r_1\}$, and let $\epsilon>0$ be as in Lemma \ref{WiggleLemma2}.  Let $y_j \in B_\epsilon(x_j)$ be arbitrary and denote $p_j: = y_j -x_j$. Then Lemma \ref{WiggleLemma2} shows that each $p_j$ can be written as
	\[p_j = \sum_{i=1}^k a_{ij} x_i, \qquad \text{with } |a_{ij}|\le r.\]
Finally, for each $j=1,\dots,k$ let 
\[\theta_j := \frac{\det M_j}{\det A}>0,\]
so that $\Theta=(\theta_1, \dots, \theta_k)$ is the (unique) solution of the above system (by Cramer's rule).
Then
	\begin{align*}
	\sum_{j=1}^k \theta_j y_j & = \sum_{j=1}^k \theta_j (x_j + p_j)\\
	& = \sum_{j=1}^k \theta_j x_j + \sum_{j=1}^k \theta_j \sum_{i=1}^k a_{ij} x_i\\
	& = \sum_{i=1}^k \theta_i x_i + \sum_{i=1}^k \Big[\sum_{j=1}^k a_{ij}\theta_j x_i\Big]\\
	& = \sum_{i=1}^k \Big[\theta_i + \sum_{j=1}^k a_{ij}\theta_j\Big] x_i\\
	& = \sum_{i=1}^k c_i x_i = 0.
	\end{align*}
	 Normalizing now yields the desired convex combination
	\[0 =\sum_{j=1}^k \frac{\theta_j}{\sum_{i=1}^k \theta_i} y_j,\]
	and the proof is finished.
\end{proof}

\end{document}